\shorttitle{$\varrho_{\nu}$-Markov partition process} 
\numberwithin{equation}{section}  
\def\per{\mathop{\rm per}\nolimits}
\def\PD{\mathop{\rm PD}\nolimits}
\def\masspartition{\mathop{\mathcal{P}_{\mathrm{m}}}\nolimits}
\def\masspartitionk{\mathop{\mathcal{P}_{\mathrm{m}}^{(k)}}\nolimits}
\def\partitionk{\mathop{\mathcal{P}^{(k)}}\nolimits}
\def\partitionsnk{\mathop{\mathcal{P}_{[n]}^{(k)}}\nolimits}
\def\subdot{\hbox{\bf.}}
\begin{document}

\title{A consistent Markov partition process generated from the paintbox process} 

\authorone[University of Chicago]{Harry Crane} 

\addressone{University of Chicago, Dept.\ of Statistics, Eckhart Hall Room 108, 5734 S.\ University Avenue,
Chicago, IL 60637 U.S.A.} 

\begin{abstract}
We study a family of Markov processes on $\mathcal{P}^{(k)}$, the space of partitions of the natural numbers with at most $k$ blocks.  The process can be constructed from a Poisson point process on $\mathbb{R}^+\times\prod_{i=1}^k\mathcal{P}^{(k)}$ with intensity $dt\otimes\varrho_{\nu}^{(k)}$, where $\varrho_{\nu}$ is the distribution of the paintbox based on the probability measure $\nu$ on $\masspartition$, the set of ranked-mass partitions of 1, and $\varrho_{\nu}^{(k)}$ is the product measure on $\prod_{i=1}^k\mathcal{P}^{(k)}$.  We show that these processes possess a unique stationary measure, and we discuss a particular set of reversible processes for which transition probabilities can be written down explicitly.
\end{abstract}

\keywords{paintbox process, Ewens partition, Poisson-Dirichlet distribution, partition process} 

\ams{60J25}{60G09} 
\section{Introduction}
Markov processes on the space of partitions appear in a variety of situations in scientific literature, such as, but not limited to, physical chemistry, astronomy, and population genetics.  See Aldous \cite{Aldous1999} for a relatively recent overview of this literature.  Well-behaved mathematically tractable models of random partitions are of interest to probabilists as well as statisticians and scientists, \cite{Ewens1972},\cite{Kingman1978},\cite{McCullagh2008},\cite{McCullagh2010}.  Ewens \cite{Ewens1972} first introduced the Ewens sampling formula in the context of theoretical population biology.  Kingman's \cite{Kingman1978} coalescent model was introduced as a model for population genetics, still its most natural setting.  However, since the seminal work of Ewens and Kingman, random partitions have appeared in areas ranging from classification models, as in \cite{Blei2003}, \cite{McCullagh2008}, to probability theory, see \cite{Bertoin2006},\cite{Pitman2005}.  McCullagh \cite{McCullagh2010} describes how the Ewens model can be used in the classical problem of estimating the number of unseen species, introduced by Fisher \cite{Fisher1943} and later studied by many, including Efron and Thisted \cite{EfronThisted1976}.

Berestycki \cite{Berestycki2004} studies a family of partition processes, called exchangeable fragmentation-coalescence (EFC) processes, whose paths are generated by a combination of independent coalescent and fragmentation processes.  The mathematical tractability of coalescent and fragmentation processes has led to the development of many results for EFC processes and has led to interest in more complex models.  For a sample of these results and relevant references see \cite{Bertoin2006},\cite{McCullaghPitman2008},\cite{Pitman2005}.  The study of processes, such as the EFC process, which admit a more general study of partition-valued processes is of interest from a theoretical as well as applied perspective.  In this paper, we study a family of processes which is similar in spirit to the EFC process, but whose sample paths are quite different.
\section{Preliminaries}\label{section:preliminaries}
Throughout this paper, $\mathcal{P}$ denotes the space of set partitions of the natural numbers $\mathbb{N}$.  We regard an element $B$ of $\mathcal{P}$ as a collection of disjoint non-empty subsets, called blocks, written $B=\{B_1,B_2,\ldots\}$, such that $\bigcup_{i}B_i=\mathbb{N}$.  The blocks are unordered, but, where necessary, they are listed in the order of their least element.  We write $B=(B_1,B_2,\ldots)$ whenever we wish to emphasize that blocks are listed in a particular order.  For $B\in\mathcal{P}$ and $b\in B$, $\#B$ is the number of blocks of $B$ and $\#b$ is the number of elements of $b$.  For any $A\subset\mathbb{N}$, let $B_{|A}$ denote the restriction of $B$ to $A$.  Wherever necessary, $\mathcal{P}^{(k)}$ denotes the space of partitions of $\mathbb{N}$ with at most $k$ blocks, i.e.\ $\mathcal{P}^{(k)}:=\{B\in\mathcal{P}:\#B\leq k\}$.  For fixed $n\in\mathbb{N}$, $\mathcal{P}_{[n]}$ and $\partitionsnk$ are the restriction to $[n]:=\{1,\ldots,n\}$ of $\mathcal{P}$ and $\mathcal{P}^{(k)}$ respectively.

%
%
It is sometimes convenient to regard a partition $B$ as either an equivalence relation defined by $B(i,j)=1\Leftrightarrow i\sim_{B} j$ or an $n\times n$ symmetric Boolean matrix whose $(i,j)$th entry is $B(i,j)$.  These three representations are equivalent and we use the same notation to refer to any one of them.

For each $\pi,\pi'\in\mathcal{P}$, we define the metric $d:\mathcal{P}\times\mathcal{P}\rightarrow\mathbb{R}$ such that
$$d(\pi,\pi')=1/\max\{n\in\mathbb{N}:\pi_{|[n]}=\pi'_{|[n]}\}.$$  The space $(\mathcal{P},d)$ is compact \cite{BertoinPIMS}.

In addition, we define the projection $D_{m,n}:\mathcal{P}_{[n]}\rightarrow\mathcal{P}_{[m]}$ for each $n\geq m\geq1$ by $D_{m,n}B_{[n]}=B_{[n]|[m]}$.  In the matrix representation, $D_{m,n}B$ is the leading $m\times m$ sub-matrix of $B$.  We seek processes $B:=(B_t,t\geq0)$ on $\mathcal{P}$ such that for each $n\in\mathbb{N}$, the restriction of $B$ to $[n]$, $B_{|[n]},$ is finitely exchangeable and consistent.  That is,
\begin{itemize}
	\item $\sigma(B_{|[n]})\sim B_{|[n]}$ for each $\sigma\in\mathcal{S}_{n}$, the symmetric group acting on $[n]$, and
	\item $B_{[n]|[m]}\sim B_{|[m]}$ for each $m<n$.
\end{itemize}
It is more convenient to work with $\mathcal{P}$ as the state space of our process than the space $\masspartition=\{(s_1,s_2,\ldots):s_1\geq s_2\geq\ldots\geq0,\mbox{ }\sum_{i}s_i\leq1\}$ of ranked-mass partitions of $x\in[0,1]$.  In accordance with the notation for set partitions, let $\masspartitionk:=\{s\in\masspartition:s_j=0\mbox{ }\forall j>k,\mbox{ }\sum_{i=1}^ks_i=1\}$ denote the ranked $k$-simplex.  There is an intimate relationship between exchangeable processes on $\mathcal{P}$ and processes on $\masspartition$ through the paintbox process.

For $s\in\masspartition$, let $X:=(X_1,X_2,\ldots)$ be independent random variables with distribution 
\begin{displaymath}\mathbb{P}_s(X_i=j)=\left\{\begin{array}{cc}
s_j, & j\geq1\\
1-\sum_{i=1}^{\infty}s_i, & j=-i\\
0,&\mbox{o.w.}\end{array}\right.\end{displaymath}  The partition $\Pi(X)$ generated by $s$ through $X$ satisfies $i\sim_{\Pi(X)}j$ if and only if $X_i=X_j.$  The distribution of $\Pi(X)$ is written $\varrho_s$ and $\Pi(X)$ is called the paintbox based on $s$.  For a measure $\nu$ on $\masspartition$, the paintbox based on $\nu$ is the $\nu$-mixture of paintboxes, written $\varrho_{\nu}(\cdot):=\int_{\masspartition}\varrho_{s}(\cdot)\nu(ds).$  Any partition obtained in this way is an exchangeable random partition of $\mathbb{N}$ and every infinitely exchangeable partition admits a representation as the paintbox generated by some $\nu$. See \cite{BertoinPIMS} and \cite{Pitman2005} for more details on the paintbox process.

We are particularly interested in exchangeable Markovian transition probabilities $(p_n)$, where, for every $n$, $p_n$ is a transition probability on $\mathcal{P}_{[n]}$ which satisfies
\begin{equation}p_n(B,B')=\sum_{B''\in D^{-1}_{n,n+1}(B')}p_{n+1}(B^*,B''),\label{eq:consistent tps}\end{equation}
for each $B,B'\in\mathcal{P}_{[n]}$ and $B^*\in D^{-1}_{n,n+1}(B)$.  Burke and Rosenblatt \cite{BurkeRosenblatt1958} show that \eqref{eq:consistent tps} is necessary and sufficient for $(p_n)$ to be consistent under selection from $\mathbb{N}$.

Likewise, for a continuous-time Markov process, $(B_n(t),t\geq0)_{n\in\mathbb{N}}$, where $B_n(t)$ is a process on $\mathcal{P}_{[n]}$ with infinitesimal generator $Q_n$, it is sufficient that the entries of $Q_n$ satisfy \eqref{eq:consistent tps} for there to be a Markov process on $\mathcal{P}$ with those finite-dimensional transition rates.

\section{The $\varrho_{\nu}$-Markov chain on $\mathcal{P}^{(k)}$}\label{section:markov chain}
Let $n,k\in\mathbb{N}$ and let $\nu$ be a probability measure on the ranked $k$-simplex $\masspartitionk$, so that the paintbox based on $\nu$ is obtained by a conditionally i.i.d.\ sample from $\nu$, i.e.\ given $s\sim\nu$, $X_1,X_2,\ldots$ are i.i.d.\ with $\mathbb{P}_{s}(X_i=j)=s_j$ for each $j=1,\ldots,k$.  For convenience, we write $B\in\mathcal{P}^{(k)}$ as an ordered list $(B_1,\ldots,B_k)$ where $B_i$ corresponds to the $i$th block of $B$ in order of appearance for $i\leq\#B$ and $B_i=\emptyset$ for $i=\#B+1,\ldots,k$. 

Consider the following Markov transition operation $B\mapsto B'$ on $\mathcal{P}^{(k)}$.  Let $B=(B_1,\ldots,B_k)\in\mathcal{P}^{(k)}$ and, independently of $B$, generate $C_1,C_2,\ldots$ which are independent and identically distributed accord to $\varrho_{\nu}$.  For each $i$, we write $C_i:=(C_{i1},\ldots,C_{ik})\in\mathcal{P}^{(k)}$.  Independently of $B,C_1,C_2,\ldots,$ generate $\sigma_1,\sigma_2,\ldots$ which are independent uniform random permutations of $[k]$.  Given $\sigma:=(\sigma_1,\sigma_2,\ldots,\sigma_k)$, we arrange $B,C_1,\ldots,C_k$ in matrix form as follows:
\begin{displaymath}
\bordermatrix{\text{}&C_{\subdot 1} & C_{\subdot 2} & \ldots & C_{\subdot k}\cr
B_1 & C_{1,\sigma_1(1)}\cap B_1 & C_{1,\sigma_1(2)}\cap B_1&\ldots& C_{1,\sigma_1(k)}\cap B_1\cr
B_2 & C_{2,\sigma_2(1)}\cap B_2 & C_{2,\sigma_2(2)}\cap B_2&\ldots& C_{2,\sigma_2(k)}\cap B_2\cr
\vdots & \vdots & \vdots & \ddots & \vdots\cr
B_k & C_{k,\sigma_k(1)}\cap B_k & C_{k,\sigma_k(2)}\cap B_k & \ldots & C_{k,\sigma_k(k)}\cap B_k}=:B\cap C^{\sigma}.
\end{displaymath}
$B\cap C^{\sigma}$ is a matrix with row totals corresponding to the blocks of $B$ and column totals $C_{\subdot j}=\bigcup_{i=1}^k(C_{i,\sigma_i(j)}\cap B_i)$.  Finally, $B'$ is obtained as the collection of non-empty blocks of $(C_{\subdot1},\ldots,C_{\subdot k})$.  The non-empty entries of $B\cap C^{\sigma}$ form a partition in $\mathcal{P}^{(k^2)}$ which corresponds to the greatest lower bound $B\wedge B'$.
\begin{prop}\label{prop:paintbox tps} The above description gives rise to finite-dimensional transition probabilities on $\partitionsnk$
\begin{equation}p_n(B,B';\nu)=\frac{k!}{(k-\#B')!}\prod_{b\in B}\frac{(k-\#B'_{|b})!}{k!}\varrho_{\nu}(B'_{|b}).\label{eq:paintbox tps}\end{equation}
\end{prop}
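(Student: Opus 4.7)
\emph{Proof plan.} The plan is to reduce the transition to row-wise independent operations on the matrix $B\cap C^{\sigma}$, compute the probability of hitting the target configuration row-by-row, and then sum over the column labelings that produce the unlabeled target $B'$.

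The first step is to recast the label-permutation operation. Given $C_i$, the ``order of appearance'' labeling is a deterministic injection from the blocks of $C_i$ into $[k]$, and composing it with the uniform $\sigma_i\in\mathcal{S}_k$ yields a \emph{uniform random injection} $h_i$ of the blocks of $C_i$ into $[k]$. A short counting argument shows that the restriction of any uniform random injection to a sub-domain is again uniform, so the labels that $h_i$ assigns to the blocks of $C_i$ meeting $B_i$ are uniformly distributed over injections into $[k]$, even after conditioning on the unlabeled partition $(C_i)_{|B_i}$.

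Next, fix an ordering $B'=(B'_1,\ldots,B'_{\#B'})$ of the blocks of $B'$ and, for each injection $\tau\colon[\#B']\hookrightarrow[k]$, compute
\[
q(\tau)=\mathbb{P}\bigl(\text{column }\tau(j)=B'_j\text{ for all }j\in[\#B'];\ \text{all other columns empty}\bigr).
\]
On this event the rows decouple: for each non-empty $b=B_i$ one needs (a) $(C_i)_{|b}=B'_{|b}$ as unlabeled partitions of $b$, which has probability $\varrho_{\nu}(B'_{|b})$ by the exchangeability of the paintbox; and (b) $h_i$ to assign to the $\#B'_{|b}$ non-empty blocks of $(C_i)_{|b}$ the specific labels dictated by $\tau$ and $B'_{|b}$, which, conditional on (a), has probability $(k-\#B'_{|b})!/k!$. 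Independence of the triples $(C_i,\sigma_i)$ across $i$ then gives
\[
q(\tau)=\prod_{b\in B}\frac{(k-\#B'_{|b})!}{k!}\varrho_{\nu}(B'_{|b}),
\]
which does not depend on $\tau$.

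The events indexed by the $k!/(k-\#B')!$ distinct injections $\tau$ are pairwise disjoint and together exhaust the event that the unlabeled partition formed by the non-empty columns equals $B'$, so summing produces \eqref{eq:paintbox tps}. The main subtlety is in (b): one must verify that conditioning on the unlabeled partition $(C_i)_{|b}$ leaves the label assignment to its blocks uniformly distributed, which rests on both the independence of $\sigma_i$ from $C_i$ and the ``restriction of a uniform injection is uniform'' observation above.
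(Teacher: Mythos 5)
Your argument is correct and takes essentially the same route as the paper's proof: row-wise independence of the pairs $(C_i,\sigma_i)$ together with consistency of the paintbox yields the factor $\prod_{b\in B}\varrho_{\nu}(B'_{|b})$, and the remaining combinatorial factor comes from counting the label alignments. Your sum over injections $\tau$ is just an explicit derivation of the paper's count of $\frac{k!}{(k-\#B')!}\prod_{b\in B}(k-\#B'_{|b})!$ valid permutation collections, each of probability $(1/k!)^{\#B}$, with the minor added benefit of making the disjointness and exhaustiveness of the corresponding events transparent.
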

\begin{proof}
Let $A\in\mathcal{P}^{(k)}$.  Fix $n,k\in\mathbb{N}$, put $B:=A_{|[n]}\in\partitionsnk$.  Let $C_1,\ldots,C_k$ be i.i.d.\ $\varrho_{\nu}$-distributed partitions and $\sigma:=(\sigma_1,\ldots,\sigma_k)$ i.i.d.\ uniform random permutations of $[k]$ as described above.  Let $B'$ be the set partition obtained from the column totals of the matrix $B\cap C^{\sigma}$ in the above construction.

From the matrix construction, it is clear that for each $i=1,\ldots,k$, the restriction $B'_{|B_i}$ is equal to the set partition in $\partitionsnk$ associated with $C_i[B_i]:=(C_{i1}\cap B_i,\ldots, C_{ik}\cap B_i)$.  Conversely, the transition $B\mapsto B'$ occurs only if the collection $(C_1,\ldots,C_k)$ is such that, for each $B_i\in B$, $C_i[B_i]=B'_{|B_i}$.  By consistency of the paintbox process, for each $i=1,\ldots,k$, $C_i[B_{i}]$ has probability $$\varrho_{\nu}(C_i[B_{i}])=\varrho_{\nu}(B'_{|B_i}).$$  Independence of the $C_i$ implies that the probability of $B\wedge B'$ given $B$ is $$\prod_{b\in B}\varrho_{\nu}(B'_{|b}).$$  Finally, each uniform permutation $\sigma_i$ has probability $1/k!$ and there are $\frac{k!}{(k-\#B')!}\prod_{b\in B}(k-\#B'_{|b})!$ collections $\sigma_1,\ldots,\sigma_{\#B}$ such that the column totals of $B\cap C^{\sigma}$ correspond to the blocks of $B'$.  This completes the proof.
\end{proof}
For fixed $n$, \eqref{eq:paintbox tps} only depends on $B$ and $B'$ through $\varrho_{\nu}$ and the number of blocks of $B$ and $B'$ and is, therefore, finitely exchangeable.  I appeal to \eqref{eq:consistent tps} to establish consistency.
\begin{prop}\label{prop:consistent tps}For any measure $\nu$ on $\masspartitionk$, let $(p_n(\cdot,\cdot;\nu))_{n\geq1}$ be the collection of transition probabilities on $\partitionsnk$ defined in \eqref{eq:paintbox tps}.  Then $(p_n)$ is a consistent family of transition probabilities.\end{prop}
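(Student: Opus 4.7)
The plan is to exploit the explicit sampling construction given just before Proposition \ref{prop:paintbox tps}, rather than verifying \eqref{eq:consistent tps} by direct manipulation of the closed-form \eqref{eq:paintbox tps}. Fix $B,B'\in\partitionsnk$ and choose any $B^{*}\in D^{-1}_{n,n+1}(B)$. On $[n+1]$, generate i.i.d.\ $\varrho_{\nu}$-distributed partitions $C_{1},\ldots,C_{k}$ and independent uniform permutations $\sigma_{1},\ldots,\sigma_{k}$ of $[k]$, and let $B''$ be the partition of $[n+1]$ built from the non-empty column totals of $B^{*}\cap C^{\sigma}$. Proposition \ref{prop:paintbox tps} gives $B''\sim p_{n+1}(B^{*},\cdot\,;\nu)$. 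The claim I would reduce the proposition to is that $B''_{|[n]}\sim p_{n}(B,\cdot\,;\nu)$; summing the resulting probabilities over those $B''$ with $B''_{|[n]}=B'$ then recovers \eqref{eq:consistent tps}.

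Two observations justify the claim. First, by consistency of the paintbox process (see \cite{BertoinPIMS}), each restriction $(C_{i})_{|[n]}$ is itself $\varrho_{\nu}$-distributed on $\partitionsnk$, and the family $(C_{1})_{|[n]},\ldots,(C_{k})_{|[n]}$ remains independent. Second, the matrix construction intertwines with the restriction map: for every $i,j$ one has $(C_{i,\sigma_{i}(j)}\cap B^{*}_{i})\cap[n]=(C_{i,\sigma_{i}(j)})_{|[n]}\cap B_{i}$, and since intersection with $[n]$ commutes with finite unions, each column total of $B^{*}\cap C^{\sigma}$ restricts to the corresponding column total of $B\cap (C_{|[n]})^{\sigma}$. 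Reading off the non-empty column totals of the latter matrix reproduces exactly the construction of Proposition \ref{prop:paintbox tps} applied to $B$, and so $B''_{|[n]}$ has law $p_{n}(B,\cdot\,;\nu)$.

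The only subtlety is that $B^{*}$ and $B$ can have different numbers of blocks: if $n+1$ is a singleton of $B^{*}$ that disappears under restriction, the row of the matrix indexed by that block becomes empty in the $B$-construction. Empty rows contribute only empty sets to every column total and are harmless; the associated $\sigma_{i}$ is independent of everything else and plays no role. An alternative direct verification would expand $p_{n+1}(B^{*},B'')$ using \eqref{eq:paintbox tps}, split the sum over $B''\in D^{-1}_{n,n+1}(B')$ into cases according to which block of $B'$ (including a possible new singleton) absorbs $n+1$, invoke the blockwise consistency $\sum_{B''_{|b}\in D^{-1}_{\#b,\#b+1}(B'_{|b})}\varrho_{\nu}(B''_{|b})=\varrho_{\nu}(B'_{|b})$, and telescope the combinatorial prefactors $k!/(k-\#B'')!$ and $(k-\#B''_{|b})!/k!$. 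This is feasible but largely reproduces the bookkeeping that the coupling argument bypasses, which is why I expect the explicit-construction route to be the main one.
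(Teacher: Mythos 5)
Your proof is correct, but it takes a genuinely different route from the paper's. The paper verifies the Burke--Rosenblatt condition \eqref{eq:consistent tps} by brute force on the closed form \eqref{eq:paintbox tps}: it expands $p_{n+1}(B^*,D^{-1}_{n,n+1}(B');\nu)$ as a sum over $D^{-1}_{n,n+1}(B')$, splits into three cases according to whether $n+1$ joins a block of $B'$ meeting $B_1$, a nonempty block disjoint from $B_1$, or a new singleton, factors out the combinatorial prefactors, and collapses the bracket using blockwise paintbox consistency $\sum_{B''\in D^{-1}_{\#B_1,\#B_1+1}(B'_{|B_1})}\varrho_{\nu}(B'')=\varrho_{\nu}(B'_{|B_1})$ --- essentially the ``alternative direct verification'' you sketch in your last paragraph. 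Your main argument instead couples the $[n]$- and $[n+1]$-constructions through a common realization of $(C_1,\ldots,C_k)$ and $(\sigma_1,\ldots,\sigma_k)$ and observes that restriction to $[n]$ commutes with the matrix operation, so that $B''_{|[n]}$ has law $p_n(B,\cdot\,;\nu)$ exactly; summing over the fibre of $B'$ then gives \eqref{eq:consistent tps}. This is shorter, makes the consistency conceptually transparent (it is inherited from the projective nature of the paintbox construction rather than from a combinatorial cancellation), and yields consistency of $p_n$ with $p_m$ for all $m<n$ in one stroke. What it costs is that it leans entirely on the identification of the law of the constructed partition in Proposition \ref{prop:paintbox tps} at both levels $n$ and $n+1$, and it requires the small bookkeeping point you partly address: restricting $C_i$ to $[n]$ may relabel its blocks (if some block of $C_i$ meets $[n+1]$ only in $n+1$), so the restricted row is a relabelling of the row the $[n]$-construction would use; this is harmless only because composing a fixed relabelling with the independent uniform $\sigma_i$ leaves its conditional law uniform, and because the output partition is read off as an unordered collection of column totals. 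It would be worth stating that explicitly alongside your treatment of the empty row coming from a singleton $\{n+1\}$ in $B^*$. With that sentence added, your argument is a complete and arguably cleaner proof than the one in the paper.
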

\begin{proof}
Fix $n,k\in\mathbb{N}$ and let $B,B'\in\partitionsnk$.  To establish consistency it is enough to verify condition \eqref{eq:consistent tps} from theorem 1 of \cite{BurkeRosenblatt1958}, i.e.\ for each $\nu$ and $B^*\in D^{-1}_{n,n+1}(B)$,
$$p_{n+1}(B^*,D^{-1}_{n,n+1}(B');\nu)=p_n(B,B';\nu).$$  
We assume without loss of generality that $B^*\in D^{-1}(B)$ is obtained from $B$ by the operation $n+1\mapsto B_1\in B$ and we write $B^*_1:=B_1\cup\{n+1\}$.  Likewise, for $B''\in D^{-1}_{n,n+1}(B')$ obtained by $n+1\mapsto B'_i\in B'\cup\{\emptyset\}$, write $B'^*_i:=B'_i\cup\{n+1\}$.  So either $n+1\in B'^*_i$ for some $i=1,\ldots,\#B'$ or $n+1$ is inserted in $B'$ as a singleton.

The change to $B\cap C^{\sigma}$ that results from inserting $n+1$ into $B_1\in B$ and $B'_i\in B'$ is summarized by the following matrix.  Note that $B'_j=\emptyset$ for $j>\#B'$.
\begin{displaymath}
\bordermatrix{\text{}&B'_{1} & B'_{2} & \ldots &B'^*_{i} &\ldots & B'_{k}\cr
B^*_1 & B'_1\cap B_1 & B'_2\cap B_1&\ldots& (B'_i\cap B_1)\cup\{n+1\}&\ldots & B'_k\cap B_1\cr
B_2 & B'_1\cap B_2 & B'_2\cap B_2&\ldots& B'_i\cap B_2 &\ldots &B'_k\cap B_2\cr
\vdots & \vdots & \vdots & \ddots &\vdots & \ddots & \vdots\cr
B_k & B'_1\cap B_k & B'_2\cap B_k & \ldots & B'_i\cap B_k&\ldots & B'_k\cap B_k}.
\end{displaymath}
Here, the blocks of $B$ are listed in any order, with empty sets inserted as needed, and the blocks of $B'$ are listed in order of least elements, with $k-\#B'$ empty sets at the end.

Given $B'$, the set of compatible partitions $D^{-1}_{n,n+1}(B')$ consists of three types depending on the subset $B_1\subset[n]$ and the block of $B'$ into which $\{n+1\}$ is inserted.  Let $B''\in D^{-1}_{n,n+1}(B')$ be the partition of $[n+1]$ obtained by inserting $n+1$ in $B'$.  Either
\begin{enumerate}
	\item[(i)] $n+1$ is inserted into a block $B'_i$ such that $B'_i\cap B_1\neq\emptyset\Rightarrow\#B''_{|B_1^*}=\#B'_{|B_1}$,
	\item[(ii)] $n+1$ is inserted into a block $B'_i\neq\emptyset$ such that $B_i'\cap B_1=\emptyset\Rightarrow\#B''_{|B_1^*}=\#B'_{|B_1}+1$, or
	\item[(iii)] $n+1$ is inserted into $B'$ as a singleton block $\Rightarrow\#B''_{|B_1^*}=\#B'_{|B_1}+1$ and $\#B''=\#B'+1$; we denote this partition by $B'_{\emptyset}$.
\end{enumerate}
There are $k-\#B'$ empty columns in which $\{n+1\}$ can be inserted as a singleton in $B'$, as in (iii).  For $B''$ obtained by (ii), the restriction of $B''$ to $B_1^*$ coincides with the restriction of $B'_{\emptyset}$ to $B_1^*$, so each of these restrictions has the same probability under $\varrho_{\nu}$.  For notational convenience in the following calculation, let $\mathcal{D}_1$ be those elements of $D^{-1}_{n,n+1}(B')$ which satisfy condition (i) above and $\mathcal{D}_2$ those which satisfy condition (ii).\\\small
\begin{eqnarray}
\lefteqn{p_{n+1}(B^*,D^{-1}_{n,n+1}(B');\nu)=}\notag\\
&&\sum_{B''\in D^{-1}_{n,n+1}(B')}\frac{k!}{(k-\#B'')!}\prod_{b\in B^*}\frac{(k-\#B''_{|b})!}{k!}\varrho_{\nu}(B''_{|b})\label{line 1}\\
&=&\frac{k!}{(k-\#B')!}\prod_{b\in B}\frac{(k-\#B'_{|b})!}{k!}\left[\sum_{B''\in\mathcal{D}_1}\prod_{b\in B^*}\varrho_{\nu}(B''_{|b})+\right.\notag\\
&&\quad\left. +\sum_{B''\in\mathcal{D}_2}\frac{1}{k-\#B'_{|B_1}}\prod_{b\in B^*}\varrho_{\nu}(B''_{|b})+\frac{k-\#B'}{k-\#B'_{|B_1}}\prod_{b\in B^*}\varrho_{\nu}(B'_{\emptyset|b})\right]\label{line 2}\\
&=&\frac{k!}{(k-\#B')!}\prod_{b\in B}\frac{(k-\#B'_{|b})!}{k!}\prod_{b\in B^*:b\neq B^*_1}\varrho_{\nu}(B'_{|b})\left[\sum_{B''\in\mathcal{D}_1}\varrho_{\nu}(B''_{|B_1^*})+\right.\notag\\
&&\quad\left.+\sum_{B''\in\mathcal{D}_2}\frac{1}{k-\#B'_{|B_1}}\varrho_{\nu}(B''_{|B_1^*})+\frac{k-\#B'}{k-\#B'_{|B_1}}\varrho_{\nu}(B'_{\emptyset|B_1^*})\right]\notag\\
&=&\frac{k!}{(k-\#B')!}\prod_{b\in B}\frac{(k-\#B'_{|b})!}{k!}\prod_{b\in B:b\neq B^*_1}\varrho_{\nu}(B'_{|b})\left[\sum_{B''\in\mathcal{D}_1}\varrho_{\nu}(B''_{|B_1^*})+\varrho_{\nu}(B'_{\emptyset|B_1^*})\right]\label{line 4}\\
&=&\frac{k!}{(k-\#B')!}\prod_{b\in B}\frac{(k-\#B'_{|b})!}{k!}\prod_{b\in B:b\neq B^*_1}\varrho_{\nu}(B'_{|b})\left[\sum_{B''\in D^{-1}_{\#B_1,\#B_1+1}(B'_{|B_1})}\varrho_{\nu}(B'')\right]\label{line 5}\\
&=&\frac{k!}{(k-\#B')!}\prod_{b\in B}\frac{(k-\#B'_{|b})!}{k!}\prod_{b\in B:b\neq B^*_1}\varrho_{\nu}(B'_{|b})\left[\varrho_{\nu}(B'_{|B_1})\right]\label{line 6}\\
&=&\frac{k!}{(k-\#B')!}\prod_{b\in B}\frac{(k-\#B'_{|b})!}{k!}\prod_{b\in B}\varrho_{\nu}(B'_{|b})\notag\\
&=&p_n(B,B';\nu).\notag
\end{eqnarray}\normalsize
Here, \eqref{line 2} is obtained from \eqref{line 1} by factoring $\frac{k!}{(k-\#B')!}\prod_{b\in B}\frac{(k-\#B'_{|b})!}{k!}$ out of the sum and using observations (i), (ii) and (iii).  In \eqref{line 4}, we use the fact that for any $B''\in\mathcal{D}_2$, $B''_{|B^*_1}=B'_{\emptyset|B_1^*}$, and there are $\#B'-\#B'_{|B_1}$ elements in $\mathcal{D}_2$ according to (ii).  Line \eqref{line 5} follows by observing that each $B''\in\mathcal{D}_1$ corresponds to an element of $D^{-1}_{\#B_1,\#B_1+1}(B'_{|B_1})$ and $B'_{\emptyset|B_1^*}$ is the element of $D^{-1}_{\#B_1,\#B_1+1}(B'_{|B_1})$ obtained by inserting $\{n+1\}$ as a singleton in $B'_{|B_1}$.  Finally, \eqref{line 6} follows from \eqref{line 5} by consistency of the paintbox process.  This completes the proof.
\end{proof}
The following result is immediate by finite exchangeability and consistency of \eqref{eq:paintbox tps} for every $n$ and Kolmogorov's extension theorem (theorem 36.1, \cite{Billingsley1995}).
\begin{thm}\label{thm:existence chain}There exists a transition probability $p(\cdot,\cdot;\nu)$ on $\left(\mathcal{P}^{(k)},\sigma\left(\bigcup_n\partitionsnk\right)\right)$ whose finite-dimensional restrictions are given by \eqref{eq:paintbox tps}.\end{thm}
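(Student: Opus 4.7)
The plan is to invoke Kolmogorov's extension theorem on the projective system $(\partitionsnk, D_{m,n})_{n\geq m}$. For each fixed $B \in \partitionk$ and each $n$, Proposition \ref{prop:paintbox tps} gives a probability measure $\mu_n^B(\cdot) := p_n(B_{|[n]}, \cdot; \nu)$ on the finite set $\partitionsnk$. The first step is to check that the family $\{\mu_n^B : n \geq 1\}$ is consistent under restriction in the sense that $\mu_n^B \circ D_{m,n}^{-1} = \mu_m^B$ for all $n \geq m$. This should be immediate from Proposition \ref{prop:consistent tps} applied with $B^* = B_{|[n]}$ (iterated as needed to descend from $n$ down to $m$).

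Because each $\partitionsnk$ is finite and the maps $D_{m,n}$ are surjective, Theorem 36.1 of \cite{Billingsley1995} then supplies, for each $B$, a unique probability measure $p(B, \cdot; \nu)$ on $(\partitionk, \sigma(\bigcup_n \partitionsnk))$ whose push-forward under $B' \mapsto B'_{|[n]}$ equals $\mu_n^B$ for every $n$. This delivers the finite-dimensional restrictions \eqref{eq:paintbox tps} by construction.

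What still needs verification is that $B \mapsto p(B, \cdot; \nu)$ is a bona fide transition probability, i.e., measurable in its first argument. I would check this on the generating cylinder events $E = \{B' \in \partitionk : B'_{|[n]} = A'\}$, for which $p(B, E; \nu) = p_n(B_{|[n]}, A'; \nu)$ depends on $B$ only through the finite object $B_{|[n]}$ and is therefore trivially measurable; a standard monotone-class argument then extends measurability to arbitrary measurable $E$. The only place where something could go wrong is the setup of Kolmogorov's theorem itself, but since every state space in the projective system is finite, all topological subtleties collapse and the compatibility condition reduces to the finite-sum identity \eqref{eq:consistent tps} already verified in Proposition \ref{prop:consistent tps}. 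In short, the theorem is essentially a packaging of the finite-dimensional exchangeability and consistency established earlier, with no substantive new content beyond the invocation of Kolmogorov.
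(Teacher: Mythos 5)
Your proposal is correct and follows essentially the same route as the paper, which likewise obtains the theorem directly from the finite exchangeability and consistency of \eqref{eq:paintbox tps} (Proposition \ref{prop:consistent tps}) together with Kolmogorov's extension theorem (Theorem 36.1 of \cite{Billingsley1995}). Your additional remark on measurability of $B\mapsto p(B,\cdot;\nu)$ via cylinder events is a harmless elaboration of a point the paper leaves implicit.
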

We call the discrete-time process governed by $p(\cdot,\cdot;\nu)$ the $\varrho_{\nu}$-Markov chain with state space $\mathcal{P}^{(k)}$.
\subsection{Equilibrium measure}\label{section:equilibrium measure}
From \eqref{eq:paintbox tps}, it is clear that for each $n,k\in\mathbb{N}$ and $B,B'\in\partitionsnk$, $p_n(B,B';\nu)$ is strictly positive provided $\nu$ is such that $\nu(s)>0$ for some $s=(s_1,\ldots,s_k)\in\masspartitionk$ with $s_k>0$.  Under this condition, the finite-dimensional chains are aperiodic and irreducible on $\partitionsnk$ and, therefore, have a unique stationary distribution.  In fact, the finite-dimensional chains based on $\nu$ are aperiodic and irreducible provided $\nu$ is not degenerate at $(1,0,\ldots,0)\in\masspartitionk$.  The existence of a unique stationary distribution for each $n$ implies that there is a unique stationary probability measure on $\left(\mathcal{P}^{(k)},\sigma\left(\bigcup_n\partitionsnk\right)\right)$ for $p(\cdot,\cdot;\nu)$ from theorem \ref{thm:existence chain}.
\begin{prop}\label{prop:existence stationary dist} Let $\nu$ be a measure on $\masspartitionk$ such that $\nu$ is non-degenerate at $(1,0,\ldots,0)\in\masspartitionk$.  Then there exists a unique stationary distribution $\theta_n(\cdot;\nu)$ for $p_n(\cdot,\cdot;\nu)$ for each $n\geq1$.\end{prop}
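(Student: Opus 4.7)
The plan is to reduce to the standard theory of finite irreducible aperiodic Markov chains: since $\partitionsnk$ is finite, it suffices to show that $p_n(\cdot,\cdot;\nu)$ is irreducible and aperiodic on this state space, after which existence and uniqueness of the stationary distribution follow from the Perron--Frobenius theorem.

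Aperiodicity I would check first and expect to be immediate. Setting $B'=B$ in \eqref{eq:paintbox tps}, each factor becomes $\varrho_\nu(\{b\})$, where $\{b\}$ denotes the one-block partition of $b$. Since $\varrho_s(\{b\})=\sum_{i=1}^k s_i^{\#b}$ is strictly positive for every $s\in\masspartitionk$ (because $\sum_i s_i=1$ forces at least one $s_i>0$), integration against $\nu$ yields $\varrho_\nu(\{b\})>0$ without any restriction on $\nu$. Hence $p_n(B,B;\nu)>0$ for all $B$. The same observation shows $p_n(B,\mathbf{0};\nu)>0$ for $\mathbf{0}:=\{[n]\}$, so the one-block partition is accessible from every state.

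The remaining and main task is to show that every $B'\in\partitionsnk$ is accessible from $\mathbf{0}$. Specializing \eqref{eq:paintbox tps} to $B=\mathbf{0}$ collapses the product to the single factor $\varrho_\nu(B')$, so in one step we reach exactly the support of $\varrho_\nu$. The non-degeneracy hypothesis guarantees that $\nu$ charges some $s$ with $j\geq 2$ nonzero entries, which in turn places positive $\varrho_s$-mass on every partition of $[n]$ with at most $j$ blocks. The step I expect to do genuine work is reaching partitions with more than $j$ blocks when $j<k$. I would handle this by iterated refinement: given a target $B'$ with $r\leq k$ blocks, construct a sequence $\mathbf{0}=B^{(0)},B^{(1)},\ldots,B^{(m)}=B'$ in which each $B^{(i)}$ is a coarsening of $B'$ and $B^{(i+1)}_{|b}$ has at most $j$ blocks for every $b\in B^{(i)}$. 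Because \eqref{eq:paintbox tps} factors over these block-wise restrictions, each $B^{(i)}\to B^{(i+1)}$ has positive one-step probability, and such a sequence can be built with $m\leq\lceil\log_j k\rceil$ by grouping blocks of $B'$ in batches of size at most $j$. Irreducibility combined with the aperiodicity above then yields a unique stationary distribution $\theta_n(\cdot;\nu)$ by Perron--Frobenius.
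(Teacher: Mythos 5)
Your proposal is correct and follows essentially the same route as the paper: both establish aperiodicity from $p_n(B,B;\nu)>0$ via the one-block paintbox probability, and irreducibility by passing through the one-block partition $1_n$ and then recursively refining toward the target $B'$ using the block-wise factorization of \eqref{eq:paintbox tps}, concluding by the standard theory of finite irreducible aperiodic chains. The only difference is cosmetic --- the paper splits off one block of $B'$ per step (binary splits, so non-degeneracy with $j=2$ suffices directly), while you refine in batches of size $j$ --- and this does not change the argument.
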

\begin{proof}
Fix $n\in\mathbb{N}$ and let $\nu$ be any measure on $\masspartitionk$ other than that which puts unit mass at $(1,0,\ldots,0)$.  For $B=(B_1,\ldots,B_m)\in\partitionsnk$, \eqref{eq:paintbox tps} gives the transition probability
$$p_n(B,B;\nu)=\frac{k!}{(k-m)!}\prod_{i=1}^m\frac{1}{k}\varrho_{\nu}(B_i)$$
and $\varrho_{\nu}(B_i)=\varrho_{\nu}([\#B_i])>0$ for each $i=1,\ldots,m.$  Hence, $p_n(B,B;\nu)>0$ for every $B\in\partitionsnk$ and the chain is aperiodic.

To see that the chain is irreducible, let $B,B'\in\partitionsnk$ and let $1_n$ denote the one block partition of $[n]$.  Then 
$$p_n(B,1_n;\nu)=k\prod_{b\in B}\frac{1}{k}\varrho_{\nu}([\#b])>0$$
and, since $\nu$ is not degenerate at $(1,0,\ldots,0)$, there exists a path $1_n\mapsto B'$ by recursively partitioning $1_n$ until it coincides with $B'$.  For instance, let $B':=(B'_1,\ldots,B'_m)\in\mathcal{P}^{(k)}$.  One such path from $1_n$ to $B'$ is
$$1_n\rightarrow(B_1',\bigcup_{i=2}^m B'_i)\rightarrow (B_1',B_2',\bigcup_{i=3}^m)\rightarrow\cdots\rightarrow B'$$
which has positive probability for any non-degenerate $\nu$.
  Hence $p_n(\cdot,\cdot;\nu)$ is irreducible, which establishes the existence of a unique stationary distribution for each $n$.
\end{proof}
\begin{thm}\label{thm:stationary distribution}Let $\nu$ be a measure on $\masspartitionk$ such that $\nu((1,0,\ldots,0))<1$.  Then there exists a unique stationary probability measure $\theta(\cdot;\nu)$ for the $\varrho_{\nu}$-Markov chain on $\mathcal{P}^{(k)}$.\end{thm}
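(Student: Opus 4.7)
The plan is to lift the finite-dimensional stationary distributions from Proposition \ref{prop:existence stationary dist} to a single probability measure on $\mathcal{P}^{(k)}$ via Kolmogorov's extension theorem, precisely mirroring the construction of the chain itself in Theorem \ref{thm:existence chain}. The crux is therefore to verify that the family $\{\theta_n(\cdot;\nu)\}_{n\geq 1}$ is consistent under restriction, i.e.\ $\theta_n(\cdot;\nu)\circ D_{n,n+1}^{-1}=\theta_n(\cdot;\nu)$ for all $n\geq 1$. Once this is established, everything else is routine.

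First I would verify consistency of the stationary distributions. Fix $n$ and let $\tilde\theta_n:=\theta_{n+1}(\cdot;\nu)\circ D_{n,n+1}^{-1}$ denote the pushforward onto $\partitionsnk$. For $B'\in\partitionsnk$, using stationarity of $\theta_{n+1}(\cdot;\nu)$ and then interchanging the sums,
\begin{eqnarray*}
\tilde\theta_n(B')&=&\sum_{B''\in D_{n,n+1}^{-1}(B')}\sum_{B^*\in\partitionsnk[n+1]}p_{n+1}(B^*,B'';\nu)\theta_{n+1}(B^*;\nu)\\
&=&\sum_{B^*\in\partitionsnk[n+1]}\theta_{n+1}(B^*;\nu)\sum_{B''\in D_{n,n+1}^{-1}(B')}p_{n+1}(B^*,B'';\nu).
\end{eqnarray*}
The inner sum equals $p_n(B^*_{|[n]},B';\nu)$ by the consistency relation \eqref{eq:consistent tps} established in Proposition \ref{prop:consistent tps}. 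Grouping the outer sum by the restriction $B:=B^*_{|[n]}$ yields $\tilde\theta_n(B')=\sum_{B\in\partitionsnk}p_n(B,B';\nu)\tilde\theta_n(B)$, so $\tilde\theta_n$ is stationary for $p_n(\cdot,\cdot;\nu)$. Uniqueness in Proposition \ref{prop:existence stationary dist} (which applies because $\nu((1,0,\ldots,0))<1$) then forces $\tilde\theta_n=\theta_n(\cdot;\nu)$.

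With consistency in hand, Kolmogorov's extension theorem produces a unique probability measure $\theta(\cdot;\nu)$ on $(\mathcal{P}^{(k)},\sigma(\bigcup_n\partitionsnk))$ whose restriction to $\partitionsnk$ coincides with $\theta_n(\cdot;\nu)$ for every $n$. Stationarity of $\theta(\cdot;\nu)$ for the kernel $p(\cdot,\cdot;\nu)$ of Theorem \ref{thm:existence chain} only needs to be checked on the generating $\pi$-system of cylinder sets of the form $D_{n,\infty}^{-1}(B')$ with $B'\in\partitionsnk$; on such a cylinder the identity $\int p(B,D_{n,\infty}^{-1}(B');\nu)\,\theta(dB;\nu)=\theta(D_{n,\infty}^{-1}(B');\nu)$ reduces, by the definition of the finite-dimensional marginals, to $\sum_B p_n(B,B';\nu)\theta_n(B;\nu)=\theta_n(B';\nu)$, which holds by Proposition \ref{prop:existence stationary dist}. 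Uniqueness is equally easy: any stationary probability measure $\theta'$ for $p(\cdot,\cdot;\nu)$ pushes forward under $D_{n,\infty}$ to a stationary distribution for $p_n(\cdot,\cdot;\nu)$ by the argument of the preceding paragraph, hence agrees with $\theta_n(\cdot;\nu)$ for every $n$, and the uniqueness clause of Kolmogorov's extension theorem gives $\theta'=\theta(\cdot;\nu)$.

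I expect the only real point of care to be the consistency computation for the $\theta_n$; everything else is a direct appeal to results already established in the paper. A minor bookkeeping issue is to ensure that the hypothesis $\nu((1,0,\ldots,0))<1$ is exactly what is needed to invoke Proposition \ref{prop:existence stationary dist} at every level $n$ (it is, since that proposition's non-degeneracy hypothesis is independent of $n$), so the uniqueness step in the consistency argument is valid uniformly.
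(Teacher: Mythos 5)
Your proposal is correct and follows essentially the same route as the paper: establish consistency of the finite-dimensional stationary distributions $\theta_n(\cdot;\nu)$ by pushing $\theta_{n+1}$ forward under $D_{n,n+1}$, showing the pushforward is stationary for $p_n$ via the interchange of sums and the consistency relation \eqref{eq:consistent tps}, invoking uniqueness from Proposition \ref{prop:existence stationary dist}, and then applying Kolmogorov extension. If anything, your explicit verification of stationarity and uniqueness at the level of $\mathcal{P}^{(k)}$ via cylinder sets is more careful than the paper's one-line appeal to Kolmogorov consistency; the only item the paper includes that you omit is the check that each $\theta_n$ is finitely exchangeable, which is not required by the statement of the theorem.
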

\begin{proof} For $\nu$ satisfying this condition, proposition \ref{prop:existence stationary dist} shows that a stationary distribution exists for each $n\geq1$.  Let $(\theta_n(\cdot;\nu),n\geq1)$ be the collection of stationary distributions for the finite-dimensional transition probabilities $(p_n(\cdot,\cdot;\nu),n\geq1)$.  We now show that the $\theta_n$ are consistent and finitely exchangeable for each $n$.

Fix $n\in\mathbb{N}$ and let $B\in\partitionsnk$.  Then stationarity of $\theta_n(\cdot;\nu)$ implies
$$\sum_{B'\in\partitionsnk}\theta_n(B';\nu)p_n(B',B;\nu)=\theta_n(B;\nu).$$  
Now write $\theta_n(\cdot)\equiv\theta_n(\cdot;\nu)$ and $p_n(\cdot,\cdot)\equiv p_n(\cdot,\cdot;\nu)$ for convenience and let $B'\in\partitionsnk$.  We have\\
\begin{eqnarray*}
\underbrace{\sum_{B''\in D^{-1}_{n,n+1}(B')}\theta_{n+1}(B'')}_{(\theta_{n+1}D^{-1}_{n,n+1})(B')}&=&\sum_{B''\in D^{-1}_{n,n+1}(B')}\sum_{B^*\in\mathcal{P}_{[n+1]}^{(k)}}\theta_{n+1}(B^*)p_{n+1}(B^*,B'')\\
&=&\sum_{B^*\in\mathcal{P}_{[n+1]}^{(k)}}\theta_{n+1}(B^*)\left[\sum_{B''\in D^{-1}_{n,n+1}(B')}p_{n+1}(B^*,B'')\right]\\
&=&\sum_{B\in\partitionsnk}\sum_{B^*\in D^{-1}_{n,n+1}(B)}\theta_{n+1}(B^*)\left[p_n(B,B')\right]\\
&=&\sum_{B\in\partitionsnk}p_n(B,B')\sum_{B^*\in D^{-1}_{n,n+1}(B)}\theta_{n+1}(B^*)\\
&=&\sum_{B\in\partitionsnk}p_n(B,B')(\theta_{n+1}D^{-1}_{n,n+1})(B).\end{eqnarray*}
So we have that $\theta_{n+1}D^{-1}_{n,n+1}$ is stationary for $p_n$ which implies that $\theta_n\equiv\theta_{n+1}D^{-1}_{n,n+1}$ by uniqueness and $\theta_n$ is consistent for each $n$.  

Let $\sigma$ be a permutation of $[n]$.  Then for any $B,B'\in\partitionsnk$, $p_n(\sigma(B),\sigma(B'))=p_n(B,B')$ by exchangeability of $p_n$.  It follows that $\theta_n$ is finitely exchangeable for each $n$ since
$$\sum_{B\in\partitionsnk}\theta_n(\sigma(B))p_n(\sigma(B),\sigma(B'))=\theta_n(\sigma(B'))$$ by stationarity, and $p_n(\sigma(B),\sigma(B'))=p_n(B,B')$ implies that 
$$\sum_{B\in\partitionsnk}\theta_n(\sigma(B))p_n(B,B')=\theta_n(\sigma(B')).$$  Hence, $\theta_n\circ\sigma$ is stationary for $p_n$ and $\theta_n\equiv\theta_n\circ\sigma$ by uniqueness.

Kolmogorov consistency implies that there exists a unique exchangeable stationary probability measure $\theta$ on $\mathcal{P}^{(k)}$ whose restriction to $[n]$ is $\theta_n$ for each $n\in\mathbb{N}$.  This completes the proof.
\end{proof}
\section{The $\varrho_{\nu}$-Markov process in continuous time}\label{section:continuous-time process}
Let $\lambda>0$, $\nu$ be a measure on $\masspartitionk$ and for each $n\in\mathbb{N}$ define Markovian infinitesimal jump rates for a Markov process on $\partitionsnk$ by
\begin{eqnarray}q_n(B,B';\nu)=\left\{\begin{array}{cc}
\lambda p_n(B,B';\nu), & B\neq B'\\
0, & \mbox{o.w.}\end{array}\right.\label{eq:rates}\end{eqnarray}
where $p_n$ is as in \eqref{eq:paintbox tps}.  The infinitesimal generator, $Q^{\nu}_n$, of the process on $\partitionsnk$ governed by $q_n$ has entries
\begin{eqnarray}Q^{\nu}_n(B,B')=\lambda\times\left\{\begin{array}{cc}
p_n(B,B';\nu), & B\neq B'\\
p_n(B,B;\nu)-1, & B=B'.\end{array}\right.\label{eq:infinitesimal generator}\end{eqnarray}
We now construct a Markov process $B:=(B(t),t\geq0)$ in continuous time whose finite-dimensional transition rates are given by \eqref{eq:rates}.
\begin{defn}\label{defn:process}A process $B:=(B(t),t\geq0)$ on $\mathcal{P}^{(k)}$ is a $\varrho_{\nu}$-Markov process if, for each $n\in\mathbb{N}$, $B_{|[n]}$ is a Markov process on $\partitionsnk$ with $Q$-matrix $Q^{\nu}_n$ as in \eqref{eq:infinitesimal generator}.
\end{defn}
A process on $\mathcal{P}^{(k)}$ whose finite-dimensional restrictions are governed by $Q^{\nu}_n$ can be constructed according to the matrix construction from section 3 by permitting only transitions $B\mapsto B'$ for $B'\neq B$, where $B,B'\in\partitionsnk$, and adding a hold time which is exponentially distributed with mean $-1/Q^{\nu}_n(B,B).$
\begin{prop}\label{prop:consistent Q-matrix}For a measure $\nu$ on $\masspartitionk$, let $(Q^{\nu}_n)_{n\in\mathbb{N}}$ be the collection of $Q$-matrices in \eqref{eq:infinitesimal generator}.  For every $n\in\mathbb{N}$, the entries of $Q^{\nu}_n$ satisfy \eqref{eq:consistent tps}.\end{prop}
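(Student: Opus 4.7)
The plan is to reduce the $Q$-matrix consistency condition to Proposition \ref{prop:consistent tps}, which already establishes consistency of the family $(p_n(\cdot,\cdot;\nu))$. Since $Q^\nu_n$ is built from $p_n$ by the rule $Q^\nu_n(B,B')=\lambda p_n(B,B';\nu)$ off-diagonal and $Q^\nu_n(B,B)=\lambda(p_n(B,B;\nu)-1)$ on the diagonal, consistency for $Q^\nu_n$ should follow by linearity together with a single bookkeeping correction for the diagonal. Fix $n\in\mathbb{N}$, $B,B'\in\partitionsnk$, and $B^*\in D^{-1}_{n,n+1}(B)$; the goal is to verify
$$Q^\nu_n(B,B') \;=\; \sum_{B''\in D^{-1}_{n,n+1}(B')} Q^\nu_{n+1}(B^*,B'').$$

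First I would handle the off-diagonal case $B\neq B'$. Since $B^*$ projects onto $B$ under $D_{n,n+1}$, it cannot simultaneously project onto $B'\neq B$, so $B^*\notin D^{-1}_{n,n+1}(B')$. Hence every $B''$ appearing in the sum is distinct from $B^*$, and $Q^\nu_{n+1}(B^*,B'')=\lambda p_{n+1}(B^*,B'';\nu)$ for each such $B''$. Pulling out $\lambda$ and applying \eqref{eq:consistent tps} as established in Proposition \ref{prop:consistent tps} immediately gives $\lambda p_n(B,B';\nu)=Q^\nu_n(B,B')$.

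Next I would treat the diagonal case $B=B'$. Here $B^*$ does lie in $D^{-1}_{n,n+1}(B)$, so exactly one term in the sum is a diagonal $Q$-entry, namely $Q^\nu_{n+1}(B^*,B^*)=\lambda(p_{n+1}(B^*,B^*;\nu)-1)$, while the remaining terms contribute $\lambda p_{n+1}(B^*,B'';\nu)$. Adding and subtracting the term $\lambda p_{n+1}(B^*,B^*;\nu)$ rewrites the sum as $\lambda\sum_{B''\in D^{-1}_{n,n+1}(B)}p_{n+1}(B^*,B'';\nu)-\lambda$, which by Proposition \ref{prop:consistent tps} equals $\lambda p_n(B,B;\nu)-\lambda = Q^\nu_n(B,B)$.

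There is no real obstacle here; the only subtle point is recognizing that the $-\lambda$ appearing on the left of the diagonal consistency equation is exactly absorbed by the single diagonal summand on the right, which in turn happens because $B^*$ belongs to $D^{-1}_{n,n+1}(B')$ if and only if $B=B'$. Once the cases are separated cleanly along this observation, consistency reduces to the already-proved consistency of $(p_n)$.
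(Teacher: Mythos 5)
Your proposal is correct and follows essentially the same route as the paper: reduce to the consistency of $(p_n)$ from Proposition \ref{prop:consistent tps}, handle the off-diagonal case directly, and in the diagonal case isolate the single term $Q^{\nu}_{n+1}(B^*,B^*)$ so that the $-\lambda$ is absorbed. Your explicit remark that $B^*\in D^{-1}_{n,n+1}(B')$ if and only if $B=B'$ is a slightly more careful justification of the case split than the paper gives, but the argument is the same.
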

\begin{proof}
Fix $n\in\mathbb{N}$ and let $B,B'\in\partitionsnk$ such that $B\neq B'$.  Then 
$$Q^{\nu}_n(B,B')=\sum_{B''\in D^{-1}_{n,n+1}(B')}Q^{\nu}_{n+1}(B_*,B'')$$ for all $B_*\in D^{-1}_{n,n+1}(B)$ by the consistency of $p_n$ from proposition \ref{prop:consistent tps}.

For $B'=B$ and $B_*\in D^{-1}_{n,n+1}(B)$, we have\small
\begin{eqnarray*}
\lefteqn{\sum_{B''\in D^{-1}_{n,n+1}(B)}Q^{\nu}_{n+1}(B_*,B'')=}\\
&&Q^{\nu}_{n+1}(B_*,B_*)+\sum_{B''\in D^{-1}_{n,n+1}(B)\backslash\{B_*\}}Q^{\nu}_{n+1}(B_*,B'')\\
&=&\lambda\left[p_{n+1}(B_*,B_*;\nu)-1+\sum_{B''\in D^{-1}_{n,n+1}(B)\backslash\{B_*\}}p_{n+1}(B_*,B'';\nu)\right]\\
&=&\lambda\left[\sum_{B''\in D^{-1}_{n,n+1}(B)}p_{n+1}(B_*,B'';\nu)-1\right]\\
&=&\lambda(p_n(B,B;\nu)-1)\\
&=&Q^{\nu}_n(B,B).\end{eqnarray*}\normalsize
\end{proof}
\begin{thm}\label{thm:existence continuous-time}For each measure $\nu$ on $\masspartitionk$, there exists a Markov process $(B(t),t\geq0)$ on $\mathcal{P}^{(k)}$ which has finite-dimensional transition rates given in \eqref{eq:rates}.\end{thm}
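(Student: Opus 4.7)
The plan is to follow the template of Theorem \ref{thm:existence chain}, replacing discrete-time consistency of transition probabilities with continuous-time consistency of $Q$-matrices from Proposition \ref{prop:consistent Q-matrix}. For each $n \in \mathbb{N}$, the state space $\partitionsnk$ is finite and $Q_n^{\nu}$ is bounded, so standard theory produces a unique Markov process $(B_n(t), t \geq 0)$ on $\partitionsnk$ with generator $Q_n^{\nu}$ and transition semigroup $P_n^t := \exp(tQ_n^{\nu})$.

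The key step is to promote the generator consistency of Proposition \ref{prop:consistent Q-matrix} to consistency of the full semigroups; that is, to show that for every $t \geq 0$, every $B, B' \in \partitionsnk$, and every $B^* \in D^{-1}_{n,n+1}(B)$,
\[
P_{n+1}^t(B^*, D^{-1}_{n,n+1}(B')) = P_n^t(B, B').
\]
Because condition \eqref{eq:consistent tps} is linear in the matrix entries and closed under composition (by essentially the argument used for $p_n$ in Proposition \ref{prop:consistent tps}), it passes to every iterate $(Q_n^{\nu})^j$, hence to every partial sum of the exponential series; finiteness of $\partitionsnk$ gives convergence to $P_n^t$ and the limiting identity. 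This in turn means that for any $0 \leq t_1 < \cdots < t_m$, the joint laws of $(B_n(t_1), \ldots, B_n(t_m))$ form a Kolmogorov-consistent family under the projections $D_{m,n}$.

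Kolmogorov's extension theorem (Theorem 36.1 of \cite{Billingsley1995}), applied on the compact space $(\mathcal{P}^{(k)}, d)$, then yields a process $(B(t), t \geq 0)$ on $\mathcal{P}^{(k)}$ whose restriction to $[n]$ is distributed as $B_n$ for every $n$, and the Markov property on $\mathcal{P}^{(k)}$ lifts from the Markov property on each $\partitionsnk$ via a monotone class argument on the algebra $\bigcup_n \sigma(\partitionsnk)$ generating the underlying $\sigma$-algebra. The only real obstacle is the semigroup-consistency step; once it is in hand, existence and identification of the finite-dimensional transition rates with $q_n$ are immediate.
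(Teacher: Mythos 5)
Your proposal follows the same route as the paper: Proposition \ref{prop:consistent Q-matrix} gives consistency of the $Q$-matrices, this is lifted to consistency of the finite-dimensional laws of the restricted processes, and Kolmogorov's extension theorem then yields the process on $\mathcal{P}^{(k)}$; your intertwining argument ($Q_{n+1}^{\nu}$-consistency passes to powers, partial sums, and hence to $e^{tQ_n^{\nu}}$) is exactly the justification for the step the paper asserts without proof in Section \ref{section:preliminaries}. The only substantive difference is that the paper additionally notes that the total jump rate $\lambda(1-p_n(B,B;\nu))$ is finite, so the restricted paths (hence $B$) are c\`adl\`ag --- a path-regularity refinement your version omits but which the statement of the theorem does not strictly require.
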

\begin{proof}
Let $\nu$ be a measure on $\masspartitionk$ and $(B_{|[n]}(t),t\geq0)_{n\in\mathbb{N}}$ be the collection of restrictions of a $\varrho_{\nu}$-Markov process with consistent $Q$-matrices $(Q^{\nu}_n)_{n\in\mathbb{N}}$ as in \eqref{eq:infinitesimal generator}.  For each $n$, $Q^{\nu}_n$ is finitely exchangeable and consistent with $Q^{\nu}_{n+1}$ by proposition \ref{prop:consistent Q-matrix}, which is sufficient for $B_{|[n]}$ to be consistent with $B_{|[n+1]}$ for every $n$.  Kolmogorov's extension theorem implies that there exist transition rates, $Q^{\nu}$, on $\mathcal{P}^{(k)}$ such that for every $B,B'\in\partitionsnk$,
$$Q^{\nu}_n(B,B')=Q^{\nu}(B_*,\{B''\in\mathcal{P}^{(k)}:B''_{|[n]}=B'\}),$$ for every $B_*\in\{B''\in\mathcal{P}^{(k)}:B''_{|[n]}=B\}$.

   Finally, for every $B\in\partitionsnk$, $Q^{\nu}_n(B,\partitionsnk\backslash\{B\})=\lambda(1-p_n(B,B;\nu))<\infty$ so that the sample paths of $B_{|[n]}$ are c\`adl\`ag for every $n$, which implies that $B$ is c\`adl\`ag.
\end{proof}
\begin{cor}\label{cor:continuous-time stationary}For $\nu$ which satisfies the condition of theorem \ref{thm:stationary distribution}, the continuous-time process $B:=(B(t),t\geq0)$ with finite-dimensional rates $q_n(\cdot,\cdot;\nu)$ in \eqref{eq:rates} has unique stationary distribution $\theta(\cdot;\nu)$ from theorem \ref{thm:stationary distribution}.\end{cor}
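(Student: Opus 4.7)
The plan is to reduce the corollary to its finite-dimensional content by recognising that the continuous-time process is obtained from the $\varrho_\nu$-Markov chain by attaching exponential holding times, so that the stationary distribution $\theta_n$ constructed in the proof of Theorem \ref{thm:stationary distribution} must also be stationary in continuous time. The conclusion on $\mathcal{P}^{(k)}$ will then follow by invoking the consistency and uniqueness already established for $\theta$.

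First, for fixed $n$, I would verify that $\theta_n(\cdot;\nu)$ from Proposition \ref{prop:existence stationary dist} satisfies $\theta_n Q^{\nu}_n = 0$. Using \eqref{eq:infinitesimal generator}, for any $B'\in\partitionsnk$,
\begin{eqnarray*}
\sum_{B\in\partitionsnk}\theta_n(B;\nu)Q^{\nu}_n(B,B') &=& \lambda\sum_{B\neq B'}\theta_n(B;\nu)p_n(B,B';\nu) + \lambda\theta_n(B';\nu)(p_n(B',B';\nu)-1)\\
&=& \lambda\left[\sum_{B\in\partitionsnk}\theta_n(B;\nu)p_n(B,B';\nu) - \theta_n(B';\nu)\right]=0,
\end{eqnarray*}
where the last step uses stationarity of $\theta_n$ for $p_n$. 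Hence $\theta_n$ is invariant for $Q^{\nu}_n$.

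Next I would establish uniqueness at level $n$. Because $q_n(B,B';\nu)=\lambda p_n(B,B';\nu)$ for $B\neq B'$, the off-diagonal structure of $Q^{\nu}_n$ inherits the irreducibility of $p_n$ already shown in the proof of Proposition \ref{prop:existence stationary dist} under the hypothesis $\nu((1,0,\ldots,0))<1$. Since $\partitionsnk$ is finite, irreducibility forces $\theta_n(\cdot;\nu)$ to be the unique invariant distribution of $Q^{\nu}_n$.

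Finally I would lift to $\mathcal{P}^{(k)}$ using the apparatus already in place. The family $(\theta_n(\cdot;\nu))_{n\geq1}$ is consistent and finitely exchangeable by Theorem \ref{thm:stationary distribution}, so Kolmogorov's extension theorem delivers a unique exchangeable probability measure $\theta(\cdot;\nu)$ on $(\mathcal{P}^{(k)},\sigma(\bigcup_n\partitionsnk))$ with $\theta(\cdot;\nu)\circ D_{n}^{-1}=\theta_n(\cdot;\nu)$. Because invariance of $\theta$ under the semigroup generated by $Q^{\nu}$ on $\mathcal{P}^{(k)}$ is equivalent to invariance of each finite-dimensional marginal under $Q^{\nu}_n$ (which is precisely what the previous two steps provide) and because uniqueness of stationary distribution at every level $n$ rules out any competing candidate on $\mathcal{P}^{(k)}$, the measure $\theta(\cdot;\nu)$ is the unique stationary distribution of $(B(t),t\geq 0)$. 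The only mildly delicate point, and thus what I would write out most carefully, is the passage from finite-dimensional to infinite-dimensional stationarity; but since the finite-dimensional marginals determine the law of a consistent process on $\mathcal{P}^{(k)}$, this is essentially cosmetic and parallels the argument used in Theorem \ref{thm:stationary distribution}.
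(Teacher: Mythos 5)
Your proposal is correct and follows essentially the same route as the paper: verify that each finite-dimensional stationary distribution $\theta_n(\cdot;\nu)$ from Proposition \ref{prop:existence stationary dist} annihilates $Q^{\nu}_n$ (the paper leaves this as ``easy to verify''; your displayed computation is exactly the right one), and then hand the consistency, exchangeability, and uniqueness of the infinite-dimensional limit back to Theorem \ref{thm:stationary distribution}. The only difference is that you write out the cancellation $\sum_B\theta_n(B)Q^{\nu}_n(B,B')=\lambda(\sum_B\theta_n(B)p_n(B,B')-\theta_n(B'))=0$ and the level-$n$ uniqueness explicitly, which the paper omits.
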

\begin{proof} For each $n\in\mathbb{N}$, let $\theta_n(\cdot;\nu)$ be the unique finite-dimensional stationary distribution of $p_n(\cdot,\cdot;\nu)$ from \eqref{eq:paintbox tps}.
It is easy to verify that for each $n\in\mathbb{N}$, $\Theta^{\nu}_n:=(\theta_n(B;\nu),B\in\partitionsnk)$ satisfies
$$\left(\Theta^{\nu}_n\right)^tQ^{\nu}_n=0,$$ which establishes that $\Theta^{\nu}_n$ is stationary for $Q^{\nu}_n$ for every $n$.  The rest follows by theorem \ref{thm:stationary distribution}.\end{proof}
\subsection{Poissonian construction}\label{section:poissonian construction}
From the matrix construction at the beginning of section \ref{section:markov chain}, a consistent family of finite-dimensional Markov processes with transition rates as in \eqref{eq:rates} can be constructed by a Poisson point process on $\mathbb{R}^+\times\prod_{i=1}^k\partitionk$ as follows.  Let $P=\{(t,C_1,\ldots,C_k)\}\subset\mathbb{R}^+\times\prod_{i=1}^k\partitionk$ be a Poisson point process with intensity measure $dt\otimes\lambda\varrho_{\nu}^{(k)}$ for some measure $\nu$ on $\masspartitionk$ and $\lambda>0$, where $\varrho_{\nu}^{(k)}$ is the product measure $\varrho_{\nu}\otimes\cdots\otimes\varrho_{\nu}$ on $\prod_{i=1}^k\partitionk$.

Construct an exchangeable process $B:=(B(t),t\geq0)$ on $\mathcal{P}^{(k)}$ by taking $\pi\in\mathcal{P}^{(k)}$ to be some exchangeable random partition and setting $B(0)=\pi$.

For each $n\in\mathbb{N}$, put $B_{|[n]}(0)=\pi_{|[n]}$ and
\begin{itemize}
	\item if $t$ is not an atom time for $P$, then $B_{|[n]}(t)=B_{|[n]}(t-)$;
	\item if $t$ is an atom time for $P$ so that $(t,C_1,\ldots,C_k)\in P$, then, independently of $(B(s),s<t)$ and $(t,C_1,\ldots,C_k)$ generate $\sigma_1,\ldots,\sigma_k$ i.i.d.\ uniform random permutations of $[k]$ and construct $B'$ from the set partition induced by the column totals $(C_{\subdot1},\ldots,C_{\subdot k})$ of 
\begin{displaymath}
\bordermatrix{\text{}&C_{\subdot1} & C_{\subdot2} & \ldots & C_{\subdot k}\cr
B_1 & C_{1,\sigma_1(1)}\cap B_1 & C_{1,\sigma_1(2)}\cap B_1&\ldots& C_{1,\sigma_1(k)}\cap B_1\cr
B_2 & C_{2,\sigma_2(1)}\cap B_2 & C_{2,\sigma_2(2)}\cap B_2&\ldots& C_{2,\sigma_2(k)}\cap B_2\cr
\vdots & \vdots & \vdots & \ddots & \vdots\cr
B_k & C_{k,\sigma_k(1)}\cap B_k & C_{k,\sigma_k(2)}\cap B_k & \ldots & C_{k,\sigma_k(k)}\cap B_k}=:B\cap C^{\sigma}.
\end{displaymath}	
	where $(B_1,\ldots,B_k)$ are the blocks of $B=B_{|[n]}(t-)$ listed in order of their least element, with $k-\#B$ empty sets at the end of the list. 
	\begin{itemize}
	\item if $B'\neq B$, then $B_{|[n]}(t)=B'$;
	\item if $B'=B$, $B_{|[n]}(t)=B_{|[n]}(t-)$.
\end{itemize}
\end{itemize}
\begin{prop}\label{prop:poissonian rates}The above process $B$ is a Markov process on $\mathcal{P}^{(k)}$ with transition matrix $Q^{\nu}$ defined by theorem \ref{thm:existence continuous-time}.\end{prop}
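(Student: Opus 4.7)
The plan is to verify, for each $n \in \mathbb{N}$, that the restricted process $(B_{|[n]}(t), t \geq 0)$ produced by the Poissonian construction is a continuous-time Markov chain on $\partitionsnk$ with $Q$-matrix $Q^\nu_n$ from \eqref{eq:infinitesimal generator}; this identifies the full process on $\mathcal{P}^{(k)}$ as the $\varrho_\nu$-Markov process of Definition \ref{defn:process}, and Theorem \ref{thm:existence continuous-time} provides uniqueness of the object with those finite-dimensional rates.

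First I would observe that $\varrho_\nu^{(k)}$ is a probability measure on $\prod_{i=1}^k\partitionk$, so the atom times of $P$ form a homogeneous Poisson process of rate $\lambda$ on $\mathbb{R}^+$. By construction, $B_{|[n]}$ is piecewise-constant between these atom times and has c\`adl\`ag sample paths. Next, I would fix an atom $(t, C_1, \ldots, C_k) \in P$ and, conditioning on $B_{|[n]}(t-) = B$, compute the conditional law of $B_{|[n]}(t)$. Because $\varrho_\nu$ is the distribution of an exchangeable paintbox on $\partitionk$, its restriction to $[n]$ has law $\varrho_\nu$ on $\partitionsnk$ by consistency; thus the restrictions $(C_{1|[n]}, \ldots, C_{k|[n]})$ are i.i.d.\ $\varrho_\nu$-paintboxes on $\partitionsnk$, and only these restrictions enter the column-totals that determine $B_{|[n]}(t)$. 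The matrix construction together with independent uniform permutations $\sigma_1, \ldots, \sigma_k$ is then exactly the situation analyzed in Proposition \ref{prop:paintbox tps}, so the conditional distribution of $B_{|[n]}(t)$ given $B_{|[n]}(t-) = B$ is $p_n(B, \cdot\,; \nu)$.

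Combining these two observations, the infinitesimal off-diagonal jump rate from $B$ to $B' \neq B$ in $B_{|[n]}$ equals $\lambda\, p_n(B, B'; \nu)$, matching $Q^\nu_n(B, B')$, while the hold rate $\lambda(1 - p_n(B, B; \nu))$ accounts for the fact that atoms producing $B' = B$ are invisible in $B_{|[n]}$. Consistency across $n$ is automatic because the single point process $P$ and the single family of permutations drive every finite-dimensional restriction simultaneously, so the laws of the restrictions of the constructed process agree with the consistent family $(Q^\nu_n)_{n \in \mathbb{N}}$. The Markov property on $\mathcal{P}^{(k)}$ itself follows from the strong Markov property of the Poisson point process: the post-$t$ evolution is a measurable function of $B(t-)$ and $P \cap ((t, \infty) \times \prod_{i=1}^k\partitionk)$, which is independent of the $\sigma$-field generated by the pre-$t$ atoms and auxiliary randomization.

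The only delicate point is to ensure that the paintbox marks $(C_1, \ldots, C_k)$, which live in the uncountable space $\partitionk$, really do induce the correct finite-dimensional law at every level simultaneously; this is precisely where consistency of the paintbox process (implicit in the coherence used throughout Section \ref{section:markov chain}) and the use of a single uniform permutation for each $B_i$-row across all $n$ do the work. Once that is in hand, the remaining steps amount to bookkeeping about jump rates of a Poisson-driven jump chain.
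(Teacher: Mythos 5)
Your proof is correct and takes essentially the same route as the paper's, which is a two-sentence appeal to the consistency of the paintbox process and of the $Q^{\nu}_n$-matrices together with the fact that the single point process $P$ makes the restrictions pathwise compatible ($B_{[n]|[m]}(t)=D_{m,n}(B_{|[n]}(t))$ for $m<n$). You have simply filled in the details the paper declares ``clear'': the rate-$\lambda$ atom times, the identification of the jump law at an atom with $p_n(B,\cdot\,;\nu)$ via Proposition \ref{prop:paintbox tps}, and the invisibility of atoms with $B'=B$ accounting for the hold rate.
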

\begin{proof}
This is clear from the consistency of both the paintbox process $\varrho_{\nu}$ and the $Q^{\nu}_n$-matrices for every $n$ and the fact that, by this construction, for any $n$ such that $B_{|[n]}(t)=\pi$ then $B_{[n]|[m]}(t)=D_{m,n}(\pi)$ for all $m<n$ and $B_{|[p]}(t)\in D^{-1}_{n,p}(\pi)$ for all $p>n$.
\end{proof}
Let $\mathbb{P}_t$ be the semi-group of a $\varrho_{\nu}$-Markov process $B(\cdot)$, i.e.\ for any continuous $\varphi:\mathcal{P}^{(k)}\rightarrow\mathbb{R}$
$$\mathbb{P}_t\varphi(\pi):=\mathbb{E}_{\pi}\varphi(B(t)),$$
the expectation of $\varphi(B(t))$ given $B(0)=\pi.$
\begin{cor}\label{cor:feller}A $\varrho_{\nu}$-Markov process has the Feller property, i.e.
\begin{itemize}
	\item for each continuous function $\varphi:\mathcal{P}^{(k)}\rightarrow\mathbb{R}$, for each $\pi\in\mathcal{P}$ one has 
	$$\lim_{t\downarrow0}\mathbb{P}_t\varphi(\pi)=\varphi(\pi),$$
	\item for all $t>0$, $\pi\mapsto\mathbb{P}_{t}\varphi(\pi)$ is continuous.
\end{itemize}\end{cor}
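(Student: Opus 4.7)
The plan is to handle the two Feller conditions separately, using the Poissonian construction of Proposition~\ref{prop:poissonian rates} as the main tool. Two preliminary facts will be used throughout: $\mathcal{P}^{(k)}$ is compact in $d$, so any continuous $\varphi$ is bounded (say by $M$) and uniformly continuous; and uniform continuity, in view of the definition of $d$, is equivalent to the existence, for each $\epsilon>0$, of an integer $n=n(\epsilon,\varphi)$ such that $\eta_{|[n]}=\eta'_{|[n]}$ forces $|\varphi(\eta)-\varphi(\eta')|<\epsilon$.

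For right-continuity at $0$, I would fix $\pi\in\mathcal{P}^{(k)}$ and $\epsilon>0$, choose the $n$ above at level $\epsilon/2$, and exploit the fact that $(B_{|[n]}(t),t\geq 0)$ is a continuous-time Markov chain on the finite state space $\partitionsnk$ with finite exit rate $\lambda(1-p_n(\pi_{|[n]},\pi_{|[n]};\nu))$ from $\pi_{|[n]}$. Standard properties of such chains give $\mathbb{P}_\pi(B_{|[n]}(t)\neq\pi_{|[n]})\to 0$ as $t\downarrow 0$, whence splitting the expectation of $|\varphi(B(t))-\varphi(\pi)|$ on this event and its complement yields
\[
|\mathbb{P}_t\varphi(\pi)-\varphi(\pi)|\;\leq\;\frac{\epsilon}{2}+2M\,\mathbb{P}_\pi(B_{|[n]}(t)\neq\pi_{|[n]}),
\]
which is less than $\epsilon$ for $t$ sufficiently small.

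For continuity of $\pi\mapsto\mathbb{P}_t\varphi(\pi)$, the natural approach is a common-Poisson-process coupling. Given $\epsilon>0$, I would choose $n$ as above and, for any $\pi,\pi'$ with $\pi_{|[n]}=\pi'_{|[n]}$, drive both processes from the same Poisson point process on $\mathbb{R}^+\times\prod_{i=1}^k\mathcal{P}^{(k)}$ using the same auxiliary uniform permutations at each atom. The crucial observation is that the matrix operation $B\mapsto B\cap C^{\sigma}$ commutes with restriction, so $B_{|[n]}(t)$ is a deterministic function of $B_{|[n]}(0)$ and the $[n]$-restriction of the Poisson atoms; this is precisely the pathwise consistency implicit in Proposition~\ref{prop:poissonian rates}. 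The coupled trajectories then agree on $[n]$ at every $t\geq 0$ almost surely, so $|\varphi(B(t))-\varphi(B'(t))|<\epsilon$, and taking expectations gives $|\mathbb{P}_t\varphi(\pi)-\mathbb{P}_t\varphi(\pi')|\leq\epsilon$ whenever $d(\pi,\pi')\leq 1/n$.

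The only mildly subtle point is the pathwise locality of the Poissonian construction in the second part, but since restriction commutes with each row-by-row intersection in $B\cap C^{\sigma}$, this falls out immediately from the definition; no further computation is required, and the rest of the argument is essentially a bookkeeping exercise.
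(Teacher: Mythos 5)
Your proof is correct and follows essentially the same route as the paper's: the first point via approximation of $\varphi$ by its dependence on finitely many coordinates (your uniform-continuity bound is just an explicit version of the paper's density argument for the cylinder functions $C_f$, combined with the finite exit rate of $B_{|[n]}$), and the second point via the identical common-Poisson-process coupling, where you are in fact slightly more careful than the paper in noting that the auxiliary permutations must be shared and that restriction commutes with the matrix operation $B\mapsto B\cap C^{\sigma}$.
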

\begin{proof}
The proof follows the same program as the proof of corollary 6 in \cite{Berestycki2004}.

Let $C_f:=\{f:\mathcal{P}^{(k)}\rightarrow\mathbb{R}:\exists n\in\mathbb{N}\mbox{ s.t. } \pi_{|[n]}=\pi'_{|[n]}\Rightarrow f(\pi)=f(\pi')\}$ be a set of functions which is dense in the space of continuous functions from $\mathcal{P}^{(k)}\rightarrow\mathbb{R}$.  It is clear that for $g\in C_f$, $\lim_{t\downarrow0}\mathbb{P}_t g(\pi)=g(\pi)$ since the first jump-time of $B(\cdot)$ is an exponential variable with finite mean.  The first point follows for all continuous functions $\mathcal{P}^{(k)}\rightarrow\mathbb{R}$ by denseness of $C_f$.  

For the second point, let $\pi,\pi'\in\mathcal{P}^{(k)}$ such that $d(\pi,\pi')<1/n$ and use the same Poisson point process $P$ to construct two $\varrho_{\nu}$-Markov processes, $B(\cdot)$ and $B'(\cdot)$, with starting points $\pi$ and $\pi'$ respectively.  By the construction, $B_{|[n]}=B'_{|[n]}$ and $d(B(t),B'(t))<1/n$ for all $t\geq0$.  It follows that for any continuous $g$, $\pi\mapsto\mathbb{P}_t g(\pi)$ is continuous.
\end{proof}
This allows us to characterize the $\varrho_{\nu}$-Markov process in terms of its infinitesimal generator.  Let $B:=(B(t),t\geq0)$ be the $\varrho_{\nu}$-Markov process on $\mathcal{P}^{(k)}$ with transition rates characterized by $(q_n)_{n\in\mathbb{N}}$ as in \eqref{eq:rates}.  The infinitesimal generator, $\mathcal{A}$, of $B$ is given by
$$\mathcal{A}(f)(\pi)=\int_{\mathcal{P}^{(k)}}f(\pi')-f(\pi)Q^{\nu}(\pi,d\pi'),$$ for every $f\in C_f$.
\section{Asymptotic frequencies}\label{section:asymptotic frequencies}
\begin{defn}\label{defn:asymptotic frequencies}A subset $A\subset\mathbb{N}$ is said to have asymptotic frequency $\lambda$ if
\begin{equation}\lambda:=\lim_{n\rightarrow\infty}\frac{\#\{i\leq n:i\in A\}}{n}\label{eq:asymptotic frequencies}\end{equation}
exists, and a random partition $B:=(B_1,B_2,\ldots)\in\mathcal{P}$ is said to have asymptotic frequencies if each block of $B$ has asymptotic frequency almost surely.\end{defn}
Adopting the notation of Berestycki \cite{Berestycki2004}, let $\Lambda(B)=(\lVert B_1\lVert,\lVert B_2\lVert,\ldots)^{\downarrow}$ be the decreasing arrangement of asymptotic frequencies of a partition $B=(B_1,B_2,\ldots)\in\mathcal{P}$ which possesses asymptotic frequencies, some of which could be 0.

According to Kingman's representation theorem (theorem 2.2, \cite{Pitman2005}) any exchangeable random partition of $\mathbb{N}$ possesses asymptotic frequencies.  Intuitively, this is a consequence of generating an exchangeable random partition of $\mathbb{N}$ by the paintbox process.  

The process described in section \ref{section:markov chain} only assigns positive probability to transitions involving two partitions with at most $k$ blocks.  From the Poissonian construction of the transition rates in section \ref{section:poissonian construction} it is evident that the states of $B=(B(t),t\geq0)$ will have at most $k$ blocks almost surely. Moreover, the description of the transition rates in terms of the paintbox process allows us to describe the associated measure-valued process of $B:=(B(t),t\geq0)$ characterized by $\lambda$ and $\nu$.
\subsection{Poissonian construction}\label{section:measure-valued poissonian}
Consider the following Poissonian construction of a measure-valued process $X:=(X(t),t\geq0)$ on $\masspartitionk$.  For any $k\in\mathbb{N}$, $\lambda>0$ and $\nu$ as above, let $P'=\{(t,P'_1,\ldots,P'_k)\}\subset\mathbb{R}^+\times\prod_{i=1}^k\masspartitionk$ be a Poisson point process with intensity measure $dt\otimes\lambda\nu^{(k)},$ where $\nu^{(k)}$ is the product measure $\nu\otimes\ldots\otimes\nu$ on $\prod_{i=1}^k\masspartitionk$.

Construct a process $X:=(X(t),t\geq0)$ on $\masspartitionk$ by generating $p_0$ from some probability distribution on $\masspartitionk$.  Put $X(0)=p_0$ and
\begin{itemize}
	\item if $t$ is not an atom time for $P'$, then $X(t)=X(t-)$;
	\item if $t$ is an atom time for $P'$ so that $(t,P'_1,\ldots,P'_k)\in P'$, with $P'_j=(P^j_1,\ldots,P^j_k)$ for each $j=1,\ldots,k$, and $X(t-)=(x_1,\ldots,x_k)\in\masspartitionk$, then, independently of $(X(s),s<t)$ and $(t,P'_1,\ldots,P'_k)$, generate $\sigma_1,\ldots,\sigma_k$ i.i.d.\ uniform random permutations of $[k]$ and construct $X(t)$ from the marginal column totals of 
	\begin{displaymath}
\bordermatrix{\text{}&P^{\subdot}_1 & P^{\subdot}_2 & \ldots & P^{\subdot}_k\cr
x_1 & x_1P^1_{\sigma_1(1)} & x_1P^1_{\sigma_1(2)}&\ldots& x_1P^1_{\sigma_1(k)}\cr
x_2 & x_2P^2_{\sigma_2(1)} & x_2P^2_{\sigma_2(2)}&\ldots& x_2P^2_{\sigma_2(k)}\cr
\vdots & \vdots & \vdots & \ddots & \vdots\cr
x_k & x_kP^k_{\sigma_k(1)} & x_kP^k_{\sigma_k(2)} & \ldots & x_kP^k_{\sigma_k(k)}}.
\end{displaymath}	
i.e.\ put $X(t)=(P^{\subdot}_1, P^{\subdot}_2,\ldots, P^{\subdot}_k)^{\downarrow}:=\left(\sum_{i=1}^k x_iP^i_{\sigma_i(j)},1\leq j\leq k\right)^{\downarrow}$.
\end{itemize}
\begin{thm} Let $X:=(X(t),t\geq0)$ be the process constructed above.  Then $X=_{\mathcal{L}}\Lambda(B)$ where $B:=(B(t),t\geq0)$ is the $\varrho_{\nu}$-Markov process from theorem \ref{thm:existence continuous-time}.\end{thm}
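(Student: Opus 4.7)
The approach is to construct both processes from a common Poisson point process on partition space and then verify that $\Lambda(B)$ and $X$ agree at every jump time by comparing the update rules.

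Take $P=\{(t,C_1,\ldots,C_k)\}$ a Poisson point process on $\mathbb{R}^+\times\prod_{i=1}^k\partitionk$ with intensity $dt\otimes\lambda\varrho_{\nu}^{(k)}$ driving $B$ via the Poissonian construction of Section \ref{section:poissonian construction}, augmented by independent i.i.d.\ uniform permutations $(\sigma_1^t,\ldots,\sigma_k^t)$ at each atom time $t$.  Each $C_i$ is a paintbox, so by Kingman's representation it almost surely possesses asymptotic frequencies whose ranked-mass partition $\Lambda(C_i)$ has distribution $\nu$.  The mapping theorem for Poisson processes then gives that $P':=\{(t,\Lambda(C_1),\ldots,\Lambda(C_k))\}$ is a PPP on $\mathbb{R}^+\times\prod_{i=1}^k\masspartitionk$ with intensity $dt\otimes\lambda\nu^{(k)}$, of the form driving $X$.

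Next I would verify that the update rules match.  Suppose $\Lambda(B(t-))=(x_1,\ldots,x_k)$ and fix an atom $(t,C_1,\ldots,C_k)$.  The $j$th column of $B\cap C^\sigma$ is $C_{\subdot j}=\bigcup_{i=1}^k(C_{i,\sigma_i^t(j)}\cap B_i)$, and since the labels generating the paintbox $C_i$ are i.i.d.\ and independent of $B$, the strong law of large numbers applied inside each $B_i$ yields $\|C_{i,\sigma_i^t(j)}\cap B_i\|=x_i\,\|C_{i,\sigma_i^t(j)}\|$ almost surely.  The asymptotic frequencies $(\|C_{i,1}\|,\ldots,\|C_{i,k}\|)$, listed in order of appearance of the blocks of $C_i$, form a random permutation of $P^i:=\Lambda(C_i)$ that is independent of $\sigma_i^t$; their composition is therefore a uniform permutation $\tilde\sigma_i$ of $[k]$.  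Summing over $i$ and taking the decreasing rearrangement gives $\Lambda(B(t))=\bigl(\sum_i x_i P^i_{\tilde\sigma_i(j)}\bigr)^\downarrow$, which has exactly the conditional distribution prescribed by the $X$-update given $\Lambda(B(t-))$ and $(P^1,\ldots,P^k)$.

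\textbf{Conclusion and obstacle.} Because the total jump rate on $\partitionsnk$ is finite for every $n$, the process has only finitely many jumps on any compact interval, so propagating the identity above from time $0$ by induction on the (countably many) jump times of $P$ shows that $(\Lambda(B(t)),t\geq 0)$ is a Markov process on $\masspartitionk$ with the same transition law as $X$.  Matching initial conditions $\Lambda(B(0))=_{\mathcal{L}}X(0)$ then yields $X=_{\mathcal{L}}\Lambda(B)$.  The main subtlety is the double-permutation bookkeeping: the order-of-appearance labeling of the blocks of each paintbox $C_i$ introduces an implicit random permutation that must be combined with the explicit uniform $\sigma_i^t$ to recover the uniform permutation appearing in the definition of $X$; independence of $\sigma_i^t$ from $C_i$ is what makes the composition uniform.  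A secondary point is the measurable almost-sure existence of asymptotic frequencies across all jump times, which follows from Kingman's theorem applied to each $C_i$ and from the invariance of the paintbox structure under restriction to the exchangeable subsets $B_i\subset\mathbb{N}$.
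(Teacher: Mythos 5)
Your proposal is correct and follows essentially the same coupling argument as the paper: both realize $B$ and $X$ from a single Poisson point process and verify that $\lVert C_{i,\sigma_i(j)}\cap B_i\rVert=x_i\,\lVert C_{i,\sigma_i(j)}\rVert$ almost surely at each atom time, then propagate over the (locally finite) set of jump times. The only differences are cosmetic in direction — you project the partition-valued PPP $P$ down to $\masspartitionk$ via the mapping theorem, whereas the paper lifts the mass-partition PPP $P'$ up by sampling paintbox partitions from its atoms — and your explicit treatment of composing the order-of-appearance labeling of the blocks of $C_i$ with the independent uniform $\sigma_i$ makes rigorous a step the paper leaves implicit.
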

\begin{proof} Fix $k\in\mathbb{N}$ and let $\nu(\cdot)$ be a measure on $\masspartitionk$.

In the description of the sample paths of $B$ in section \ref{section:continuous-time process}, note that generating\\ $(C_1,\ldots,C_k)\sim\varrho_{\nu}^{(k)}$ is equivalent to first generating $s_i\sim\nu$ independently for each $i=1,\ldots,k$, then generating random partitions $C_i$ by sampling from $s_i$ for each $i=1,\ldots,k$.  Finally, $B'_i$ is set equal to the marginal total of column $i$ of the matrix $B\cap C^{\sigma}$, where $\sigma:=(\sigma_1,\ldots,\sigma_k)$ is an i.i.d.\ collection of uniform random permutations of $[k]$.  Hence, we can couple the two processes $X$ and $B$ together using the Poisson point process $P'$ described above.

Let $X$ evolve according to the Poisson point process $P'$ on $\mathbb{R}^+\times\prod_{i=1}^k\masspartitionk$ as described above.  Let $B$ evolve by the modification that if $t$ is an atom time of $P'$ then we obtain partitions $(C_1,\ldots, C_k)$ by sampling $X^i:=(X^i_1,X^i_2,\ldots)$ i.i.d.\ from $P'_i$ for each $i=1,\ldots,k$, i.e.\
$$\mathbb{P}(X^i_1=j|P'_i)=P^i_j,$$
 and defining the blocks of $C_i$ as the equivalence classes of $X^i$.  Constructed in this way, $\lVert C_{ij}\lVert= P^i_j$ almost surely for each $i,j=1,\ldots,k$ and $(C_1,\ldots,C_k)\sim\varrho_{\nu}^{(k)}$.  
 
After obtaining the $C_i$, generate, independently of $B,C_1,\ldots,C_k,P'$, i.i.d.\ uniform permutations $\sigma_1,\ldots,\sigma_k$ of $[k]$ and proceed as in the construction of section \ref{section:poissonian construction} where $B,C_1,\ldots, C_k$ are arranged in the matrix $B\cap C^{\sigma}$ and the blocks of $B'$ are obtained as the marginal column totals of $B\cap C^{\sigma}$.  The $(i,j)$th entry of $B\cap C^{\sigma}$ is $C_{i,\sigma_i(j)}\cap B_i$ for which we have $\lVert C_{i,\sigma_i(j)}\cap B_i\lVert=\lVert C_{i,\sigma_i(j)}\lVert\lVert B_i\lVert=x_iP^i_{\sigma_i(j)}$ a.s.

By this construction, $B(t)$ is constructed according to a Poisson point process with the same law as that described in section \ref{section:poissonian construction}, and $B(t)$ possesses ranked asymptotic frequencies which correspond to $X(t)$ almost surely for all $t\geq0$.
\end{proof}
\begin{cor}$X(t):=(\Lambda(B(t)),t\geq0)$ exists almost surely.\end{cor}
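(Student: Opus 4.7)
The plan is to deduce the corollary directly from the coupling constructed in the proof of the preceding theorem. There one exhibits $B$ and $X$ on a common probability space driven by the Poisson point process $P'$, and shows that at each atom time the block sizes in $B\cap C^{\sigma}$ satisfy $\|C_{i,\sigma_i(j)}\cap B_i\| = \|C_{i,\sigma_i(j)}\|\cdot\|B_i\| = x_i P^i_{\sigma_i(j)}$ almost surely, so the column totals --- which are the blocks of $B(t)$ --- have asymptotic frequencies equal to the coordinates of $X(t)$. What remains to verify is that this ``almost surely'' can be made uniform in $t\geq 0$, which is the only place where a short additional argument is needed.

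To that end, I would first invoke Kingman's representation theorem (theorem 2.2 of \cite{Pitman2005}) to conclude that the exchangeable initial value $B(0)=\pi$ has asymptotic frequencies off a null set $N_0$. Next, since $P'$ has intensity $\lambda\, dt\otimes\nu^{(k)}$, its atom times form a countable set $\{t_n\}_{n\geq 1}$ almost surely; at each atom time $t_n$ the paintbox partitions $C_1,\ldots,C_k$ used in the transition each possess asymptotic frequencies off a null set $N_n$, by the strong-law-of-large-numbers argument underlying the paintbox construction. Removing $N_0\cup\bigcup_{n\geq 1}N_n$ leaves an event $\Omega^*$ of full measure on which every partition entering the construction --- $B(0)$ together with every $C_i$ sampled at every atom time --- admits asymptotic frequencies.

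On $\Omega^*$ one proceeds inductively along the sequence of atom times: if $B(t_n-)$ has asymptotic frequencies, so does each intersection $C_{i,\sigma_i(j)}\cap B_i(t_n-)$ by the product identity recalled above, and hence so does each column total, i.e.\ each block of $B(t_n)$. Because $B$ is constant between atom times, this propagates to every $t\geq 0$, and moreover $\Lambda(B(t))=X(t)$ coordinate-wise throughout. Consequently $\Lambda(B(t))$ exists for all $t\geq 0$ on $\Omega^*$, and the statement for any $\varrho_\nu$-Markov process follows because its marginal law coincides with the one produced by the coupling. The only (mild) obstacle is the passage from ``for each fixed $t$'' to ``for all $t$ simultaneously'', resolved here because the Poisson intensity produces only countably many atoms in any finite interval, turning what could have been an uncountable union of null sets into a countable one.
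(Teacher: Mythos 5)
Your proposal is correct and follows essentially the same route as the paper, which states the corollary without proof as an immediate consequence of the coupling in the preceding theorem (whose proof already asserts that $B(t)$ possesses ranked asymptotic frequencies corresponding to $X(t)$ almost surely for all $t\geq0$). The only thing you add is the explicit reduction of the ``for all $t$ simultaneously'' issue to a countable union of null sets via the countability of the atom times of $P'$, a detail the paper leaves implicit.
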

\subsection{Equilibrium measure}\label{section:ranked-mass equilibrium measure}
Just as the process $(B(t),t\geq0)$ on $\mathcal{P}^{(k)}$ converges to a stationary distribution, so does its associated measure-valued process $(X(t),t\geq0)$ from section \ref{section:measure-valued poissonian}.
\begin{thm} The associated measure-valued process $X$ for a $\varrho_{\nu}$-Markov process with unique stationary measure $\theta(\cdot;\nu)$ has equilibrium measure $\tilde{\theta}(\cdot;\nu),$ the distribution of the ranked frequencies of a $\theta(\cdot;\nu)$-partition.\end{thm}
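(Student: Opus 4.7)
The plan is to exploit the pathwise coupling delivered by the preceding theorem, which shows that $X(t)=\Lambda(B(t))$ almost surely for all $t\geq 0$ when both processes are driven by the same Poisson point process $P'$. Once this coupling is in hand, the claim reduces to pushing the stationarity of $\theta(\cdot;\nu)$ for $B$ through the deterministic (a.s.) map $\Lambda$.

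Concretely, I would start the $\varrho_{\nu}$-Markov process from $B(0)\sim\theta(\cdot;\nu)$ and run the coupled pair $(B,X)$ forward in time. By Corollary \ref{cor:continuous-time stationary}, $B(t)\sim\theta(\cdot;\nu)$ for every $t\geq 0$. Since $\theta(\cdot;\nu)$ is exchangeable by Theorem \ref{thm:stationary distribution}, Kingman's representation (invoked just before Definition \ref{defn:asymptotic frequencies}) guarantees that $B(t)$ possesses asymptotic frequencies almost surely, so $\Lambda(B(t))$ is a.s.\ well-defined and, by definition of $\tilde\theta(\cdot;\nu)$ as the law of $\Lambda$ under a $\theta(\cdot;\nu)$-partition, satisfies $\Lambda(B(t))\sim\tilde\theta(\cdot;\nu)$. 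The coupling identity $X(t)=\Lambda(B(t))$ then yields $X(t)\sim\tilde\theta(\cdot;\nu)$ for every $t\geq 0$, which is exactly invariance of $\tilde\theta(\cdot;\nu)$ under the semigroup of $X$.

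For uniqueness, I would argue in the reverse direction: any invariant probability measure $\mu$ on $\masspartitionk$ for $X$ lifts, via the paintbox construction $\varrho_{\mu}=\int\varrho_s\mu(ds)$, to an exchangeable probability measure on $\mathcal{P}^{(k)}$; using the fact that the matrix transition rule of section \ref{section:measure-valued poissonian} is the exact $\Lambda$-projection of the rule of section \ref{section:poissonian construction} together with consistency of the paintbox, one checks that $\varrho_{\mu}$ is invariant for $B$. By Theorem \ref{thm:stationary distribution} this forces $\varrho_{\mu}=\theta(\cdot;\nu)$, and then taking ranked frequencies of both sides gives $\mu=\tilde\theta(\cdot;\nu)$. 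The main obstacle I anticipate is this last verification that the lift $\mu\mapsto\varrho_{\mu}$ preserves invariance, which is the only step that is not simply a one-line consequence of the coupling; the rest of the argument is essentially just plugging stationarity through an almost-sure identity.
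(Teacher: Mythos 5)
Your argument is correct, but it takes a genuinely different route from the paper's. The paper's proof is a one-line appeal to weak convergence: Proposition 1.4 of \cite{BertoinPIMS} says the ranked-frequency functional is continuous in distribution on exchangeable partitions, so the convergence of $B(t)$ in law to $\theta(\cdot;\nu)$ immediately transfers to convergence of $X(t)=\Lambda(B(t))$ in law to $\tilde\theta(\cdot;\nu)$; this directly produces the limiting (equilibrium) distribution but leans implicitly on ergodic convergence of the finite-dimensional chains, which the cited corollary does not literally state (it follows from the aperiodicity and irreducibility established in Proposition \ref{prop:existence stationary dist}). You instead work pathwise: the coupling $X(t)=\Lambda(B(t))$ a.s.\ pushes the invariance of $\theta(\cdot;\nu)$ through $\Lambda$ to give invariance of $\tilde\theta(\cdot;\nu)$, and you then obtain uniqueness by lifting an arbitrary invariant $\mu$ on $\masspartitionk$ to $\varrho_{\mu}$ on $\mathcal{P}^{(k)}$. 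The one step you flag as the obstacle --- that the lift preserves invariance --- closes cleanly via Kingman's representation rather than any computation with the transition rule: start $B(0)\sim\varrho_{\mu}$, note that the dynamics preserve exchangeability, so the law of $B(t)$ is the paintbox directed by the law of $\Lambda(B(t))=X(t)$, which equals $\mu$ by invariance; hence $B(t)\sim\varrho_{\mu}$ for all $t$, and Theorem \ref{thm:stationary distribution} forces $\varrho_{\mu}=\theta(\cdot;\nu)$ and $\mu=\tilde\theta(\cdot;\nu)$. The trade-off is that your version delivers invariance plus uniqueness of the invariant law without discussing convergence, whereas the paper's version delivers the limit law but presupposes it; together the two arguments are complementary, and yours supplies the uniqueness claim that the paper's proof leaves tacit.
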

\begin{proof}
Proposition 1.4 in \cite{BertoinPIMS} states that if a sequence of exchangeable random partitions converges in law on $\mathcal{P}$ to $\pi_{\infty}$ then its sequence of ranked asymptotic frequencies converges in law to $\vert\pi_{\infty}\vert^{\downarrow}$.  Hence, from corollary \ref{cor:continuous-time stationary} we have that $X$ has equilibrium distribution given by the ranked asymptotic frequencies of a $\theta(\cdot;\nu)$-partition.
\end{proof}
\section{The $(\alpha,k)$-Markov process}\label{section:(alpha,k)-process}
Pitman \cite{Pitman2005} discusses a two-parameter family of infinitely exchangeable random partitions called the $(\alpha,\theta)$ process which has finite-dimensional distributions
\begin{equation}p_n(B;\alpha,\theta):=\frac{(\theta/\alpha)^{\uparrow\#B}}{\theta^{\uparrow n}}\prod_{b\in B}-(-\alpha)^{\uparrow\#b},\label{eq:alpha-theta EPPF}\end{equation}
for $(\alpha,\theta)$ satisfying either
\begin{itemize}
	\item $\alpha=-\kappa<0$ and $\theta=m\kappa$ for some $m=1,2,\ldots,$ or
	\item $0\leq\alpha\leq1$ and $\theta>-\alpha$.
\end{itemize}
For $k\in\mathbb{N}$ and $\alpha>0$, a $(-\alpha,k\alpha)$ partition has finite-dimensional distributions
\begin{equation}\rho_n(B;\alpha,k)=\frac{k!}{(k-\#B)!}\frac{\prod_{b\in B}\Gamma(\alpha+\#b)/\Gamma(\alpha)}{\Gamma(k\alpha+n)/\Gamma(k\alpha)}\label{eq:DM fidi}\end{equation} whose support is $\partitionsnk$.

The distribution of the ranked asymptotic frequencies of an $(\alpha,\theta)$ partition is called the Poisson-Dirichlet distribution with parameter $(\alpha,\theta)$, written $\PD(\alpha,\theta)$.

For notational convenience, introduce the $\alpha$-permanent \cite{McCullagh2005} of an $n\times n$ matrix $K$, \begin{equation}\per_{\alpha}K=\sum_{\sigma\in\mathcal{S}_n}\alpha^{\#\sigma}\prod_{i=1}^n K_{i,\sigma(i)},\notag\end{equation}
where $\#\sigma$ is the number of cycles of the permutation $\sigma$, and note that when $B\in\mathcal{P}_{[n]}$ is regarded as a matrix,
\begin{equation}\per_{\alpha}B=\prod_{b\in B}\per_{\alpha}B_{|b}=\prod_{b\in B}\Gamma(\alpha+\#b)/\Gamma(\alpha),\label{eq:permanent 2}\end{equation}
which allows us to write \eqref{eq:DM fidi} as
\begin{equation}\rho_n(B;\alpha,k)=\frac{k!}{(k-\#B)!}\frac{\per_{\alpha}B}{(k\alpha)^{\uparrow n}},\end{equation}
where $(\beta)^{\uparrow n}=\beta(\beta+1)\cdots(\beta+n-1).$

We now consider a specific sub-family of reversible $\varrho_{\nu}$-Markov processes for which the transition probabilities can be written down explicitly.  For $k\in\mathbb{N}$ and $\alpha>0$, let $\nu$ be the $\PD(-\alpha/k,\alpha)$ distribution on $\masspartitionk$ and define transition probabilities according to the matrix construction based on $\nu$ as in section \ref{section:markov chain}.  We call this process the $(\alpha,k)$-Markov process.
\begin{prop}\label{prop:(alpha,k) tps} The $(\alpha,k)$-Markov process has finite-dimensional transition probabilities
\begin{eqnarray}
p_n(B,B';\alpha,k)&=&\frac{k!}{(k-\#B')!}\prod_{b\in B}\frac{\prod_{b'\in B'}\Gamma(\alpha/k+\#(b\cap b'))/\Gamma(\alpha/k)}{\Gamma(\alpha+\#b)/\Gamma(\alpha)}\label{eq:(alpha,k) tp1}\\
&=&\frac{k!}{(k-\#B')!}\frac{\per_{\alpha/k}(B\wedge B')}{\per_{\alpha}B}.\label{eq:(alpha,k) tp2}\end{eqnarray}\end{prop}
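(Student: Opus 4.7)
The plan is to specialize the general paintbox transition formula \eqref{eq:paintbox tps} to $\nu = \PD(-\alpha/k,\alpha)$ and then repackage the resulting Gamma-function expression using the $\alpha$-permanent identity \eqref{eq:permanent 2}.

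The first step is to identify the paintbox law under $\nu = \PD(-\alpha/k,\alpha)$. Since this measure is supported on $\masspartitionk$, the corresponding paintbox is a $(-\alpha/k,\alpha)$-partition, and its restriction to $[m]$ has the explicit Dirichlet-multinomial form \eqref{eq:DM fidi} with parameter $(\alpha/k,k)$; this is precisely the first bullet of Section \ref{section:(alpha,k)-process} applied with $\kappa = \alpha/k$ and $m = k$. Applied to the restriction $B'_{|b}$, whose blocks are the non-empty intersections $b\cap b'$ for $b'\in B'$, this yields
$$\varrho_\nu(B'_{|b}) = \frac{k!}{(k-\#B'_{|b})!}\frac{\prod_{b'\in B'}\Gamma(\alpha/k+\#(b\cap b'))/\Gamma(\alpha/k)}{\Gamma(\alpha+\#b)/\Gamma(\alpha)},$$
where the inner product extends harmlessly to all $b'\in B'$ because $\Gamma(\alpha/k)/\Gamma(\alpha/k) = 1$ when $b\cap b' = \emptyset$.

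Substituting this into \eqref{eq:paintbox tps}, the factor $(k-\#B'_{|b})!/k!$ in \eqref{eq:paintbox tps} cancels term-by-term against the leading $k!/(k-\#B'_{|b})!$ from $\varrho_\nu(B'_{|b})$, leaving exactly \eqref{eq:(alpha,k) tp1}. For the permanent form \eqref{eq:(alpha,k) tp2}, I invoke \eqref{eq:permanent 2}: the denominator $\prod_{b\in B}\Gamma(\alpha+\#b)/\Gamma(\alpha)$ is $\per_\alpha B$, and since the blocks of $B\wedge B'$ are exactly the non-empty intersections $b\cap b'$ with $b\in B$, $b'\in B'$, the double product $\prod_{b\in B}\prod_{b'\in B'}\Gamma(\alpha/k+\#(b\cap b'))/\Gamma(\alpha/k)$ equals $\per_{\alpha/k}(B\wedge B')$, again absorbing the empty intersections at no cost.

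The computation is essentially bookkeeping; the only substantive step is the identification of $\varrho_{\PD(-\alpha/k,\alpha)}$ with the distribution \eqref{eq:DM fidi} under the parameter correspondence $(\alpha,\theta) = (-\alpha/k,\alpha) \leftrightarrow (\alpha/k,k)$. I anticipate no genuine obstacle beyond keeping the indices and signs straight.
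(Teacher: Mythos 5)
Your proposal is correct and follows essentially the same route as the paper: identify the paintbox $\varrho_{\PD(-\alpha/k,\alpha)}$ with the $(\alpha/k,k)$ instance of \eqref{eq:DM fidi} (the paper cites Theorem 3.2 and Definition 3.3 of Pitman for this step), substitute into \eqref{eq:paintbox tps} so the combinatorial prefactors cancel, and convert to the permanent form via \eqref{eq:permanent 2}. Your write-up merely makes explicit the block-by-block bookkeeping that the paper compresses into one line.
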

\begin{proof}
Theorem 3.2 and definition 3.3 from \cite{Pitman2005} shows that the distribution of $B\sim\varrho_{\nu}$ where $\nu=\PD(-\alpha/k,\alpha)$ is 
$$\rho_n(B;\alpha/k,k)=\frac{k!}{(k-\#B)!}\frac{\per_{\alpha/k}B}{(\alpha)^{\uparrow n}}.$$  Combining this and \eqref{eq:paintbox tps} yields \eqref{eq:(alpha,k) tp1}; \eqref{eq:(alpha,k) tp2} follows from \eqref{eq:permanent 2}.
\end{proof}
\begin{prop}For each $(\alpha,k)\in\mathbb{R}^+\times\mathbb{N}$ and $n\in\mathbb{N}$, $p_n(\cdot,\cdot;\alpha,k)$ defined in proposition \ref{prop:(alpha,k) tps} is reversible with respect to \eqref{eq:DM fidi} with parameter $(\alpha,k)$.\end{prop}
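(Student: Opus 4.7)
The plan is to verify the detailed balance equation
\[
\rho_n(B;\alpha,k)\, p_n(B,B';\alpha,k) = \rho_n(B';\alpha,k)\, p_n(B',B;\alpha,k)
\]
for every $B,B'\in\partitionsnk$, which is sufficient for reversibility with respect to $\rho_n(\cdot;\alpha,k)$. The strategy is direct substitution using the $\alpha$-permanent representations already developed in the paper, relying on the symmetry $B\wedge B' = B'\wedge B$ to expose a manifestly symmetric expression.

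The main calculation is short. Using \eqref{eq:DM fidi} together with \eqref{eq:permanent 2} write
\[
\rho_n(B;\alpha,k) = \frac{k!}{(k-\#B)!}\,\frac{\per_{\alpha}B}{(k\alpha)^{\uparrow n}},
\]
and use the permanent form \eqref{eq:(alpha,k) tp2} of the transition probability. Multiplying, the factor $\per_{\alpha}B$ in the denominator of $p_n(B,B';\alpha,k)$ cancels with the $\per_{\alpha}B$ in the numerator of $\rho_n(B;\alpha,k)$, leaving
\[
\rho_n(B;\alpha,k)\, p_n(B,B';\alpha,k) = \frac{k!}{(k-\#B)!}\,\frac{k!}{(k-\#B')!}\,\frac{\per_{\alpha/k}(B\wedge B')}{(k\alpha)^{\uparrow n}}.
\]
The right-hand side is visibly symmetric in $B$ and $B'$ since $B\wedge B' = B'\wedge B$ and the combinatorial factors depend only on $\#B$ and $\#B'$. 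Swapping the roles of $B$ and $B'$ yields exactly $\rho_n(B';\alpha,k)\, p_n(B',B;\alpha,k)$, which establishes detailed balance.

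I do not anticipate a serious obstacle: the whole argument is a one-line cancellation, and the only conceptual input is the $\alpha$-permanent identity \eqref{eq:permanent 2} together with the fact that the greatest lower bound is a symmetric operation. The mild technical points to keep track of are (i) that the product-of-gamma expressions in \eqref{eq:(alpha,k) tp1} really do factor as $\per_{\alpha/k}(B\wedge B')$ (this is exactly \eqref{eq:permanent 2} applied to the partition whose blocks are the nonempty intersections $b\cap b'$), and (ii) that the prefactors involving $\#B$ and $\#B'$ arise symmetrically from both $\rho_n$ and $p_n$. Once these bookkeeping points are noted, the proof consists of the single displayed identity above and an appeal to its symmetry.
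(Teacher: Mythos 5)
Your proposal is correct and follows the same route as the paper: the paper simply asserts that detailed balance is ``immediate'' from \eqref{eq:DM fidi} and \eqref{eq:(alpha,k) tp1}, and your one-line cancellation using the permanent forms \eqref{eq:(alpha,k) tp2} and \eqref{eq:permanent 2}, together with the symmetry of $B\wedge B'$, is exactly the computation being left implicit. Nothing is missing.
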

\begin{proof}
Let $\rho_n(\cdot;\alpha,k)$ be the distribution with parameter $(\alpha,k)$ defined in \eqref{eq:DM fidi}, and $p_n(\cdot,\cdot;\alpha,k)$ be as defined in \eqref{eq:(alpha,k) tp1}.  For any $B,B'\in\partitionsnk$, it is immediate that
\begin{equation}\rho_n(B;\alpha,k)p_n(B,B';\alpha,k)=\rho_n(B';\alpha,k)p_n(B',B;\alpha,k),\end{equation}
which establishes reversibility.
\end{proof}
Bertoin \cite{Bertoin2008} discusses some reversible EFC processes which have $\PD(\alpha,\theta)$ distribution as their equilibrium measure, for $0<\alpha<1$ and $\theta>-\alpha$.  Here we have shown reversibility with respect to $\PD(\alpha,\theta)$ for $\alpha<0$ and $\theta=-m\alpha$ for $m\in\mathbb{N}$.

The construction of the continuous-time process is a special case of the procedure in section \ref{section:continuous-time process}.  The measure-valued process $(X(t),t\geq0)$ based on the $(\alpha,k)$-Markov process has unique stationary measure $\PD(-\alpha,k\alpha),$ the distribution of the ranked frequencies of a partition with finite-dimensional distributions as in \eqref{eq:DM fidi} with parameter $(\alpha,k)$.
\section{Discussion}
The paths of the $\varrho_{\nu}$-Markov process discussed above are confined to $\mathcal{P}^{(k)}$.  Unlike the EFC-process \cite{Berestycki2004}, which has a natural interpretation as a model in certain physical sciences, the $\varrho_{\nu}$-Markov process has no clear interpretation as a physical model.  However, the matrix construction introduced in section \ref{section:markov chain} leads to transition rates which admit a closed form expression in the case of the $(\alpha,k)$-Markov process.  

The $(\alpha,k)$ class of models could be useful as a statistical model for relationships among statistical units which are known to fall into one of $k$ classes.  In statistical work, it is important that any observation has positive probability under the specified model.  The $(\alpha,k)$-process assigns positive probability to all possible transitions and so any observed sequence of partitions in $\partitionsnk$ will have positive probability for any choice of $\alpha>0$.  In addition, the model is exchangeable, consistent and reversible, particularly attractive mathematical properties which could have a natural interpretation in certain applications.  Future work is intended to explore applications for this model, as well as develop some of the tools necessary for its use in statistical inference.

\acks
This work is supported by the National Science Foundation grant no. DMS-0906592.  I also thank Peter McCullagh for his many helpful comments throughout the editing of this paper.

\end{document}